\documentclass[12pt]{extarticle}
\usepackage{amsmath, amsthm, amssymb, color}
\usepackage[colorlinks=true,linkcolor=blue,urlcolor=blue]{hyperref}
\usepackage{graphicx}
\usepackage{caption}
\usepackage{mathtools}
\usepackage{enumerate}
\usepackage{verbatim}
\usepackage{tikz,tikz-cd,tikz-3dplot}
\usepackage{amssymb}
\usetikzlibrary{matrix}
\usetikzlibrary{arrows}
\usepackage{array}
\usepackage{algorithm}
\usepackage[noend]{algpseudocode}
\usepackage{caption}
\usepackage[normalem]{ulem}
\usepackage{subcaption}
\tolerance 10000
\headheight 0in
\headsep 0in
\evensidemargin 0in
\oddsidemargin \evensidemargin
\textwidth 6.5in
\topmargin .25in
\textheight 8.8in
\synctex=1
\usepackage{makecell}
\usepackage{array}
\usepackage{enumitem}

\newtheorem{theorem}{Theorem}[section]
\newtheorem{proposition}[theorem]{Proposition}
\newtheorem{lemma}[theorem]{Lemma}
\newtheorem{corollary}[theorem]{Corollary}

\theoremstyle{definition}
\newtheorem{definition}[theorem]{Definition}
\newtheorem{problem}[theorem]{Problem}
\newtheorem{remark}[theorem]{Remark}

\newenvironment{example}
{\pushQED{\qed}\examplex}
{\popQED\endexamplex}


\setcounter{MaxMatrixCols}{20}

\makeatletter
\def\Ddots{\mathinner{\mkern1mu\raise\p@
\vbox{\kern7\p@\hbox{.}}\mkern2mu
\raise4\p@\hbox{.}\mkern2mu\raise7\p@\hbox{.}\mkern1mu}}
\makeatother

\newcommand{\cA}{\mathcal{A}}
\newcommand{\cM}{\mathcal{M}}
\newcommand{\cG}{\mathcal{G}}
\newcommand{\cC}{\mathcal{C}}
\newcommand{\cB}{\mathcal{B}}
\newcommand{\bP}{\mathbb P}
\newcommand{\bR}{\mathbb{R}}
\newcommand{\fx}{\mathbf{x}}

\usepackage{enumitem}
\usepackage{array}

\newcommand{\Gr}{\text{Gr}}

\newcommand{\alt}{\mathrm{alt}}
\newcommand{\var}{\mathrm{var}}

\title{\bf Combinatorics of $m=1$ Grasstopes}

\author{Yelena Mandelshtam,
  Dmitrii Pavlov, and Elizabeth Pratt}
\date{}
\begin{document}
\maketitle

\begin{abstract}
\noindent
A Grasstope is the image of the totally nonnegative Grassmannian $\Gr_{\geq 0}(k,n)$ under a linear map $\Gr(k,n)\dashrightarrow \Gr(k,k+m)$. This is a generalization of the amplituhedron, a geometric object of great importance to calculating scattering amplitudes in physics. The amplituhedron is a Grasstope arising from a totally positive linear map. While amplituhedra are relatively well-studied, much less is known about general Grasstopes. We study Grasstopes in the $m=1$ case and show that they can be characterized as unions of cells of a hyperplane arrangement satisfying a certain sign variation condition, extending the work of Karp and Williams \cite{KarpWilliams}. Inspired by this characterization, we also suggest a notion of a Grasstope arising from an arbitrary oriented matroid.
\end{abstract}

\section{Introduction} \label{sec:intro}

The (tree) amplituhedron, introduced by Arkani-Hamed and Trnka in \cite{AHT}, is a geometric object playing an important role in calculations of scattering amplitudes in planar $N=4$ Super-Yang-Mills theory. 
It is defined as the image of the totally nonnegative Grassmannian $\Gr_{\geq 0}(k,n)$ under a totally positive linear map $\Tilde{Z}:\Gr(k,n)\dashrightarrow \Gr(k,k+m)$ given by an $n\times(k+m)$ matrix $Z$. 
While immediate physical relevance of the amplituhedron becomes manifest at $m=4$, it is an object of independent mathematical interest for any $m$. It is known that when $k=1$, it is a cyclic polytope \cite{bernd} and when $k+m=n$, it is isomorphic to the totally nonnegative Grassmannian $\Gr_{\geq 0}(k,n)$.

In recent years, the amplituhedron has been studied extensively from the point of view of algebraic combinatorics for $m=1,2,4$ (see \cite{bcfw, parity, shelling, KarpWilliams, meq2pos, meq2}). 
The structure of the amplituhedron in the $m=1$ case is particularly simple: Karp and Williams \cite{KarpWilliams} show that it is linearly homeomorphic to the complex of bounded cells of an affine hyperplane arrangement and therefore is homeomorphic to a closed ball. 

One reason that the amplituhedron is so amenable to combinatorial study is that the totally nonnegative Grassmannian has a rich combinatorial structure. 
In particular, $\Gr_{\geq 0}(k,n)$ admits a stratification by positroid cells, which are all homeomorphic to open balls \cite{postnikov}. 
However, amplituhedra are images of very special linear maps, just as cyclic polytopes are very special polytopes. From this point of view, it makes sense to consider images of the totally nonnegative Grassmannian under arbitrary linear maps. 
In \cite{Lam16:14:CDM}, Lam considers the images of positroid cells under arbitrary linear maps, and calls them \emph{Grassmann polytopes}. 
Images of the whole totally nonnegative Grassmannian are referred to as \emph{full Grassmann polytopes}. 

While amplituhedra have attracted significant attention from the mathematical community, many results in this area rely on the total positivity of the map $\Tilde{Z}$, and much less is known about more general Grassmann polytopes.
Most of the known results are assembled in \cite[Part 2]{Lam16:14:CDM}. Many of them are limited to the case when $\Tilde{Z}$ does not have base points on the totally nonnegative Grassmannian, i.e. is regular (well-defined) on $\Gr_{\geq 0}(k,n)$. 

In this paper, we initiate the study of general full Grassmann polytopes (\emph{Grasstopes}) and focus on the case of $m=1$, when the ambient Grassmannian is the dual projective space $(\bP^k)^\vee$.
We extend the results of \cite{KarpWilliams}, showing that when $\Tilde{Z}$ is regular on $\Gr_{\geq 0}(k,n)$, the resulting Grasstope is a union of cells of a projective hyperplane arrangement satisfying a certain sign variation condition.
This, in particular, implies that Grasstopes arising from such $\Tilde{Z}$ are closed and connected, in accordance with \cite[Proposition 15.2]{Lam16:14:CDM} (note that this result applies to Grasstopes for general $m$ but only for the tame case). 
When there are no additional restrictions on $\Tilde{Z}$, we show that the image of the totally positive Grassmannian $\Gr_{>0}(k,n)$ can still be characterized in terms of sign changes, although the image of $\Gr_{\geq 0}(k,n)$ might have irregular boundary.
We also show that, unlike amplituhedra, general $m=1$ Grasstopes are not necessarily homeomorphic to closed balls or even contractible.

This paper is organized as follows.
In Section \ref{sec:prelim}, we define Grassmannians, their totally nonnegative and totally positive parts, and Grasstopes. 
We divide Grasstopes into three categories (tame, wild, and rational) based on the properties of the map $\Tilde{Z}$. We note that the terms ``tame'' and ``wild'' for Grasstopes were introduced by Lam.  
We introduce the concepts necessary for the sign variation characterization of $m=1$ Grasstopes and  prove some auxiliary results about general Grasstopes. 
In Section \ref{sec:meq1}, we study the combinatorics and geometry of Grasstopes for $m=1$ and prove the sign variation characterization results for tame and wild Grasstopes, as well as for open rational Grasstopes.
Section \ref{sec:examples} is devoted to examples. In Section \ref{sec:ormatdefs}, we give background on oriented matroids, which is useful for Section \ref{sec:top}. Finally, in Section \ref{sec:top} we investigate how many regions of a hyperplane arrangement can be in an $m=1$ Grasstope, and, based on the sign variation characterization, suggest a definition of the Grasstope of a (not necessarily realizable) oriented matroid. 

\section{Preliminaries} \label{sec:prelim}

The \emph{real Grassmannian} $\Gr(k,n)$ is the variety parameterizing $k$-dimensional subspaces of an $n$-dimensional vector space $\mathbb{R}^n$. 
This can be identified with the variety of $(k-1)$-dimensional subspaces of an $(n-1)$-dimensional real projective space $\bP^{n-1}$. 
A linear space, considered as an element of $\Gr(k,n)$, can be represented by a $k\times n$ matrix $A$ whose rows span the space. 
The Grassmannian $\Gr(k,n)$ can be realized as a subvariety of the projective space $\bP^{\binom{n}{k}-1}$ via the \emph{Pl\"ucker embedding}, which sends a matrix $A$ representing an element in $\Gr(k,n)$ to the vector of its maximal minors. 
Maximal minors of $A$ are called \emph{Pl\"ucker coordinates}. 
This embedding does not depend on the choice of matrix representatives.
For more details on the Grassmannian and the Pl\"ucker embedding, see \cite[Chapter 5]{nlalg}.

The \emph{totally nonnegative Grassmannian} $\Gr_{\geq 0}(k,n)$ is the subset of $\Gr(k,n)$ consisting of the elements whose non-zero Pl\"ucker coordinates all have the same sign \cite{postnikov, Williams:2021zph}. 
Each element in $\Gr_{\geq 0}(k,n)$ can be represented by a \emph{totally nonnegative} $k\times n$ matrix, that is, by a matrix with nonnegative maximal minors.
The \emph{totally positive Grassmannian} $\Gr_{> 0}(k,n)$ is a subset of $\Gr(k,n)$ consisting of the elements whose Pl\"ucker coordinates are all non-zero and have the same sign. 
Elements of $\Gr_{>0}(k,n)$ can be represented by $k\times n$ matrices with strictly positive maximal minors (such matrices are called \emph{totally positive}).

Let~$Z$ be a~real $n\times (k+m)$ matrix of full rank, where~$k+m \leq n$. 
The matrix $Z$ defines a rational map~$\Tilde{Z}: \Gr(k,n)\dashrightarrow \Gr(k,k+m)$ by~$[A] \mapsto [AZ]$, where~$[A]$ is the class in~$\Gr(k,n)$ of a matrix $A$. 

\begin{definition}[Grasstopes]
The image $\Tilde{Z}(\Gr_{\geq 0}(k,n)) \subseteq \Gr(k,k+m)$ is called the $(n,k,m)$-\emph{Grasstope} of $Z$ and is denoted by $\mathcal{G}_{n,k,m}(Z)$. 
\end{definition}

The totally nonnegative Grassmannian $\Gr_{\geq 0}(k,n)$ is a basic semialgebraic set (that is, it can be described by polynomial equations and inequalities) and $\mathcal{G}_{n,k,m}(Z)$ is its image under a polynomial map. Thus, it follows from the Tarski-Seidenberg theorem \cite[Theorem A.49]{soscon} that $\mathcal{G}_{n,k,m}(Z)$ is also semialgebraic.

In \cite{Lam16:14:CDM} $\mathcal{G}_{n,k,m}(Z)$ are called \emph{full Grassmann polytopes}.
When $Z$ is a totally positive matrix, one recovers the definition of the \emph{(tree) amplituhedron} $\mathcal{A}_{n,k,m}(Z)$ \cite{AHT}, a geometric object of fundamental importance to calculating scattering amplitudes in particle physics. 

Note that the matrix $AZ$ defines an element in $\Gr(k,k+m)$ if and only if $AZ$ has full rank.
It is a priori not guaranteed that the map $\Tilde{Z}$ is \emph{well-defined} on $\Gr_{\geq 0}(k,n)$, that is, that $AZ$ has full rank for any totally nonnegative $k\times n$ matrix $A$.
In general, the map $\Tilde{Z}$ has base locus $$\cB(\Tilde{Z}) := \{  V : \text{dim}(V \cap \ker Z^T) \geq 1 \} \subset \Gr(k,n),$$ which may or may not intersect $\Gr_{\geq 0}(k,n).$ 
Note that the $\cB(\Tilde{Z})$ is a Schubert variety, so in particular, it is closed in $\Gr(k,n)$ \cite[Section 17]{Lam16:14:CDM}. 
We will often view $\Gr_{\geq 0}(k,n)$ as a parameter space of $(k-1)$-dimensional subspaces of $\bP^{n-1},$ in which case the base locus is all projective subspaces which are not disjoint from $\bP(\ker Z^T)$. 

Finding combinatorial conditions for $\Tilde{Z}$ to be well-defined on $\Gr_{\geq 0}(k,n)$ had been an active area of research for several years.
In \cite[Proposition 15.2]{Lam16:14:CDM} Lam proved that if $\Tilde{Z}$ is well-defined on $\Gr_{\geq 0}(k,n)$, then $\mathcal{G}_{n,k,m}(Z)$ is closed and connected. In the same proposition he showed that the following condition is sufficient for $\Tilde{Z}$ to be well-defined.
\begin{equation}\label{cond25}
     \text{There exists a }(k+m) \times k\text{ matrix }M\text{ such that all } k \times k\text{ minors of }ZM \text{ are positive}.
\end{equation}
Geometrically, condition \eqref{cond25} means that the element of $\Gr(k+m,n)$ represented by $Z$ contains a totally positive $k$-dimensional subspace, that is, an element of $\Gr_{>0}(k,n)$.
Lam also conjectured that this condition is necessary for $\Tilde{Z}$ to be well-defined on $\Gr_{\geq 0}(k,n)$. This conjecture turned out to be false, with a counterexample given by Galashin (see \cite[Remark 9.3]{KarpWilliams} and Example \ref{eg:wild}). A combinatorial criterion was given in \cite[Theorem 4.2]{karp}. Lam's conjecture gives rise to the following definition. 

\begin{definition}[Tame Grasstope]
   The Grasstope $\mathcal{G}_{n,k,m}(Z)$ is called \emph{tame} if $Z$ satisfies \eqref{cond25}. 
\end{definition}

Let $\phi: V \to W$ be a map of vector spaces. We define its $k^{\text{th}}$ exterior power $\wedge^k\phi: \bigwedge^kV \to \bigwedge^k W$ by $ v_1 \wedge \ldots \wedge v_k \mapsto \phi(v_1) \wedge \ldots \wedge \phi(v_k).$ If $M$ is a matrix representing $\phi$ in bases $\{e_i\}$ of $V$ and $\{\Tilde{e}_j\}$ of $W$, we denote the matrix representing $\wedge^k \phi$ in the induced bases of $\bigwedge^k V$ and $\bigwedge^k W$ by $\wedge^k M$. 
Then the matrix of $\Tilde{Z}$ is $\wedge^kZ,$ where we use the \emph{Pl\"ucker embedding} $\Gr(k, n) \to \mathbb{P}(\bigwedge^k(\bR^n)), \ \text{Span}\{v_1,\ldots, v_k\} \mapsto v_1 \wedge \ldots \wedge v_k$. 
Concretely, the entries of $\wedge^kM$ are the $k\times k$ minors of $M$, ordered by multi-indices in reverse lexicographic order.

We now give a simple geometric criterion of tameness. Although well-known to specialists in this area, this result seems to have not appeared in the literature yet.  In what follows, for the sake of simplicity, we slightly abuse notation and write $\mathcal{G}_{n,k,m}(Z)$ both for the Grasstope as a subset of~$\Gr(k,k+m)$ and its image under the Pl\"ucker embedding. 

\begin{proposition}
\label{thm:tame}
An $n\times (k+m)$ matrix $Z$ satisfies \eqref{cond25} (i.e. $\mathcal{G}_{n,k,m}(Z)$ is tame) if and only if there exists a hyperplane in $\bP^{\binom{k+m}{k}-1}$, corresponding to a point in $\Gr(k, k+m)$ which does not intersect $\mathcal{G}_{n,k,m}(Z)$. 
\end{proposition}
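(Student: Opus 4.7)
The plan is to prove the two directions separately, treating the forward implication first as the easier one.

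For the forward direction ($Z$ satisfies \eqref{cond25} implies some hyperplane misses $\mathcal{G}_{n,k,m}(Z)$), given $M$ with $[ZM]_I > 0$ for all $I$, I will take $c \in \bR^{\binom{k+m}{k}}$ to be the Pl\"ucker vector of $M$, namely $c_J := [M]_J$, and will exhibit the corresponding hyperplane $H := \{[p] : \sum_J c_J p_J = 0\}$ as one missing the Grasstope. The key computation is a double application of Cauchy--Binet,
\[
\sum_J c_J [AZ]_J \;=\; \det\bigl((AZ)M\bigr) \;=\; \det\bigl(A(ZM)\bigr) \;=\; \sum_I [A]_I [ZM]_I,
\]
whose right-hand side is strictly positive whenever $[A] \in \Gr_{\geq 0}(k,n)$ is a representative on which $\Tilde Z$ is defined, since $[A]_I \geq 0$ (with at least one strict inequality) while $[ZM]_I > 0$ throughout.

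For the reverse direction, suppose $H = \{[p] : c \cdot p = 0\}$ is a hyperplane disjoint from $\mathcal{G}_{n,k,m}(Z)$, and let $W := \text{colspan}(Z) \subset \bR^n$. I will first argue that $c \cdot [AZ]$ has constant sign on $\Gr_{\geq 0}(k,n) \setminus \cB(\Tilde Z)$: since $\cB(\Tilde Z)$ is a Schubert variety of codimension at least $m+1 \geq 2$ in $\Gr(k,n)$, the open set $\Gr_{>0}(k,n) \setminus \cB(\Tilde Z)$ remains (path-)connected, so the continuous nonvanishing function $c \cdot [AZ]$ has constant sign there; after replacing $c$ by $-c$ if needed, this sign is positive and extends by continuity to the closure. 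Next, setting $v := (\wedge^k Z)\,c$, an element of the $\binom{k+m}{k}$-dimensional subspace $\wedge^k W \subset \bigwedge^k \bR^n$, the identity $v \cdot [A] = c \cdot [AZ]$ gives $v \cdot [A] > 0$ for every $[A] \in \Gr_{\geq 0}(k,n) \setminus \cB$; evaluating at the coordinate subspace $V_I \in \Gr_{\geq 0}$ then shows $v_I > 0$ exactly when the rows of $Z$ indexed by $I$ are linearly independent.

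The concluding step is to upgrade $v$ to a decomposable element of $\wedge^k W$ with strictly positive Pl\"ucker coordinates, i.e.\ to a $k$-plane $U \subset W$ with $[U] \in \Gr_{>0}(k,n)$; any $M$ with $\text{colspan}(ZM) = U$ will then satisfy \eqref{cond25}. When $m=1$, which is the main focus of the paper, this step is essentially free: $\wedge^k W \cong \bR^{k+1}$ projectively coincides with $\Gr(k,W)$, so $v$ itself is already the Pl\"ucker vector of a unique $k$-plane $U \subset W$, and positivity of $v$ forces $U \in \Gr_{>0}(k,n)$. For general $m \geq 2$, where decomposable elements form a proper subvariety of $\wedge^k W$, I would use a convex-geometric/density argument to find a decomposable element inside the open cone $\wedge^k W \cap \bR_{>0}^{\binom{n}{k}}$. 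This last step for $m \geq 2$ is the main obstacle, since the passage from an arbitrary positive linear functional on $\Gr_{\geq 0}$ to a decomposable one is not formally automatic from the earlier steps and requires exploiting the specific structure of the totally positive Grassmannian inside $\wedge^k W$.
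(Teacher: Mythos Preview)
Your forward direction is correct and is essentially the paper's argument, phrased cleanly via Cauchy--Binet.

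Your reverse direction takes a genuinely different route. The paper argues by contrapositive: assuming \eqref{cond25} fails, it fixes an arbitrary $(k+m)\times k$ matrix $M$, observes that $ZM$ must have either a zero minor or two minors of opposite sign, and in each case exhibits an explicit $A\in\Gr_{\geq 0}(k,n)$ (the coordinate subspace $V_I$ in the first case, a two-term combination in the second) with $(\wedge_k ZM)\cdot(\wedge_k A)=0$, so that $[AZ]$ lies on the hyperplane $\{(\wedge_k M)\cdot p=0\}$. You instead argue directly: the function $A\mapsto c\cdot[AZ]$ is continuous and nonvanishing on the connected set $\Gr_{>0}(k,n)\setminus\cB(\Tilde Z)$, hence has constant sign; pushing $c$ through $\wedge^k Z$ gives $v\in\wedge^k W$ with $v_I>0$, and for $m=1$ this $v$ is automatically decomposable, yielding the desired $U\subset W$ in $\Gr_{>0}(k,n)$. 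Your approach is more conceptual and avoids the case analysis; the paper's is more elementary and constructive.

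Regarding the $m\geq 2$ gap you flag: it is real, but the paper's proof has exactly the same gap. The paper only shows that every hyperplane \emph{of the form} $H_M=\{(\wedge_k M)\cdot p=0\}$ meets the Grasstope when \eqref{cond25} fails; for $k,m\geq 2$ these are precisely the hyperplanes with decomposable normal vector, which form a proper subvariety $\Gr(k,k+m)$ of the dual $\bP^{\binom{k+m}{k}-1}$. So neither argument, as written, establishes the stated biconditional beyond $m=1$ or $k=1$.

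One further point worth noting: your conclusion that ``positivity of $v$ forces $U\in\Gr_{>0}(k,n)$'' requires $v_I>0$ for \emph{every} $I$, i.e.\ that every $k$-subset of rows of $Z$ is independent. This is automatic once $\Tilde Z$ is well-defined on $\Gr_{\geq 0}(k,n)$ (since then each coordinate subspace $V_I\notin\cB(\Tilde Z)$), but it does not follow from the bare hypothesis that some hyperplane misses $\cG_{n,k,m}(Z)$. The paper's proof has the parallel issue: the witness $A$ it constructs (e.g.\ $A=V_I$ when $[ZM]_I=0$) can be a base point of $\Tilde Z$, in which case $[AZ]$ is undefined and does not actually land in the Grasstope. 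Both arguments are thus implicitly restricted to the well-defined (tame or wild) setting.
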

    \begin{proof}
    Any $(k+m) \times k$ matrix $M$ defines a hyperplane in $\bP^{\binom{k+m}{k} -1}$ by its Pl\"ucker coordinates: 
    a point $p \in \bP^{\binom{k+m}{k} -1}$ lies on the hyperplane defined by $M$ if and only if $(\wedge^k M)^Tp = 0$.
    Suppose that $Z$ satisfies \eqref{cond25} and $M$ is a $(k+m)\times k$ matrix such that $ZM$ is totally positive. Suppose that a point in $\mathcal{G}_{n,k,m}(Z)$ lies on the hyperplane defined by $M$.
    Then, for some $A \in \Gr_{\geq 0}(k, n)$, it holds that $(\wedge^k M)^T(\wedge^k AZ)^T = 0$, which is equivalent to $(\wedge^k(ZM))^T(\wedge^k A)^T = 0$. 
    Since all $k\times k$ minors of $A$ are non-negative, and at least one is nonzero, it is not possible for all $k\times k$ minors of $ZM$ to have the same sign.
    Thus, if there exists $M$ such that $ZM$ has all positive (or negative) $k\times k$ minors, then the image $\mathcal{G}_{n,k,m}(Z)$ does not intersect the hyperplane defined by $M$.
    
    Now suppose that $Z$ does not satisfy \eqref{cond25}, that is, for any $M$ the matrix $ZM$ has either a zero $k\times k$ minor or at least one positive and one negative $k\times k$ minor.
    We will show that there exists a matrix $A \in \Gr_{\geq 0}(k, n)$ such that $(\wedge^k(ZM))^T(\wedge^k A)^T = 0$, so that the hyperplane defined by $M$ intersects $\Tilde{Z}(\Gr_{\geq 0}(k, n))$.
    
    In the first case, when $ZM$ has a zero minor in the $i^{\text{th}}$ Pl\"ucker coordinate, one can find an element $A \in \Gr_{\geq 0}(k, n)$ which has all Pl\"ucker coordinates equal to zero except for the $i^{\text{th}}$ one. 
    In this case, $(\wedge^k(ZM))^T(\wedge^k A)^T = 0$.
    
    Now consider the second case, in which $ZM$ has  all $k\times k$ minors nonzero with at least one positive and one negative. By connectivity of the basis exchange graph of the uniform matroid, there exists a set of column indices $I = \{i_1, \dots, i_{k-1}\}$ such that two Pl\"ucker coordinates involving $I$ (which we label $p_{I\cup \{i\}}$ and $p_{I\cup \{j\}}$) have different signs. 
    Then, take $(q_{1, \dots k}: \dots :q_{n-k+1, \dots, n}) \in \bP^{\binom{n}{k} - 1}$ such that all coordinates except for $q_{I \cup \{i\}}, q_{I\cup\{j\}}$ are zero, and $q_{I\cup\{i\}} = |p_{I\cup\{j\}}|$ and $q_{I\cup \{j\}} = |p_{I\cup\{i\}}|$. 
    Then, all Pl\"ucker relations are satisfied so this point represents an element $A \in \Gr_{\geq 0}(k, n)$. We have $(\wedge^k(ZM))^T(\wedge^k A)^T = 0$, so the hyperplane given by $M$ intersects the Grasstope $\mathcal{G}_{n,k,m}(Z)$.  
    \end{proof}

    Note that when $m=1$, every hyperplane in $\bP^{\binom{k+m}{k}-1}$ corresponds to some point in $\Gr(k, k+m)$. Then by choosing a hyperplane disjoint from $\mathcal{G}_{n,k,1}(Z)$ to be the hyperplane at infinity, we arrive at the following result. 

    \begin{corollary}\label{cor:tame}
    The Grasstope $\mathcal{G}_{n,k,1}(Z)$ is tame if and only if its image under the Pl\"ucker embedding is contained in some affine chart of $\bP^{k}$.
    \end{corollary}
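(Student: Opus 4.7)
The plan is to deduce this directly from Proposition \ref{thm:tame} by invoking the standard identification between affine charts of projective space and complements of hyperplanes. The proposition already gives us the key equivalence: tameness is the same as the existence of a hyperplane in $\bP^{\binom{k+m}{k}-1}$ disjoint from $\mathcal{G}_{n,k,m}(Z)$. So the only work left is to translate ``disjoint from a hyperplane'' into ``contained in an affine chart.''

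For the forward direction, I would start with a tame Grasstope, apply Proposition \ref{thm:tame} to obtain a hyperplane $H \subset \bP^{\binom{k+m}{k}-1}$ that misses $\mathcal{G}_{n,k,m}(Z)$, and then declare $H$ to be the hyperplane at infinity. By definition of an affine chart, the complement $\bP^{\binom{k+m}{k}-1} \setminus H$ is an affine chart isomorphic to $\mathbb{A}^{\binom{k+m}{k}-1}$, and the Grasstope sits inside it.

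For the converse, I would use the fact that every affine chart of $\bP^{\binom{k+m}{k}-1}$ is the complement of some hyperplane: if the image of $\mathcal{G}_{n,k,m}(Z)$ under the Pl\"ucker embedding lies in an affine chart $U$, then $\bP^{\binom{k+m}{k}-1} \setminus U$ is a hyperplane that fails to meet the Grasstope, and Proposition \ref{thm:tame} again gives tameness.

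There is essentially no obstacle here; the statement is a direct rephrasing of Proposition \ref{thm:tame} in the language of affine charts, so the only care needed is to make the bijection between hyperplanes and choices of ``hyperplane at infinity'' explicit. No new combinatorics or total positivity arguments are required beyond what went into the proposition.
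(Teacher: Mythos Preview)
Your proposal is correct and matches the paper's approach exactly: the paper also deduces the corollary directly from Proposition~\ref{thm:tame} by choosing a disjoint hyperplane to be the hyperplane at infinity, treating the statement as an immediate rephrasing.
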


Tame Grasstopes share many nice properties with amplituhedra. 
In particular, for $m=1$ they are homeomorphic to closed balls and can be described as complexes of bounded cells of affine hyperplane arrangements \cite[Section 9]{KarpWilliams}. 
The focus of this paper, however, is to take a step away from the tame case and study Grasstopes that behave somewhat less regularly.   

\begin{definition}[Wild Grasstope]
    If the map $\Tilde{Z}$ is well-defined on $\Gr(k,n)_{\geq 0}$ but $Z$ does not satisfy \eqref{cond25}, then the Grasstope $\mathcal{G}_{n,k,m}(Z)$ is called \emph{wild}.
\end{definition}

Even though $\Tilde{Z}$ might not be well-defined on $\Gr_{\geq 0}(k,n)$, it still makes sense to consider the image of $\Gr_{\geq 0}(k,n)\setminus (\cB(\Tilde{Z}) \cap \Gr_{\geq 0}(k,n))$, where $\cB(\Tilde{Z})$ is the base locus of $\Tilde{Z}$.
In this case we will still write $\Tilde{Z}(\Gr_{\geq 0}(k,n))$ for this image.

\begin{definition}[Rational Grasstope]
    Suppose the map $\Tilde{Z}$ is not well-defined on $\Gr(k,n)_{\geq 0}$. Then the image $\mathcal{G}_{n,k,m}(Z) = \Tilde{Z}(\Gr(k,n)_{\geq 0})$ is called a \emph{rational} Grasstope. The image $\mathcal{G}^\circ_{n,k,m}(Z) = \Tilde{Z}(\Gr(k,n)_{> 0})$ of the totally positive Grassmannian is called an \emph{open rational} Grasstope. From $m=1$, this is indeed an open set, as shown in Proposition \ref{prop:opengrass}.
\end{definition}

We conclude this section with technical results that will prove useful in characterization of $m=1$ Grasstopes in Section \ref{sec:meq1}. 

Given a point $u$ in $\bP^n,$ we associate a sign pattern $\sigma = (\sigma_0,\ldots,\sigma_n) \in \{+,-,0\}^{n+1}$ to $u$ in the following way.
Pick $i$ such that $u_i \neq 0$ and set $\sigma_i := +$.
Then $\sigma_j := \operatorname{sign}(u_iu_j)$, which is a well-defined function of homogeneous coordinates of~$u$.
Since we associate sign labels to points in projective space, we will identify sign labels $\sigma$ and $-\sigma$.
Each orthant of $\bP^n$ consists of the points with the same sign pattern.
For instance, the sign pattern $( + : + : - : + )$ for a point in $\bP^3$ represents the orthant defined by $$\{u_0u_1 > 0, u_0u_2 < 0, u_0u_3 > 0, u_1u_2 < 0, u_1u_3 > 0, u_2u_3 < 0\}.$$ 

Given a point $x \in \bP^k$ being the image of a hyperplane $X = \text{Span}\{w_1, ..., w_k\}$ under the Pl\"ucker embedding of $\Gr(k, k+1),$ and any point $v \in \bP^{k},$ one may consider the bilinear map 
\begin{align*}\label{eq:pairing}
    T : \bR^{k+1} \times \bR^{k+1} & \to \bR \\
    x, v & \mapsto \det 
    \begin{bmatrix}
    \vert & & \vert &  \vert \\
    w_1   &... & w_k  & v \\
    \vert & & \vert & \vert
\end{bmatrix} = \sum_{j=1}^{k+1} (-1)^{k+1-j}p_{1 ... \hat{j} ... k+1}v_j,
\end{align*}
where $x$ and $v$ are represented by any vector of homogeneous coordinates. Then the vanishing of $T$ is a projectively well-defined notion, and $T(x,v) = 0$ if and only if $v$ is contained in $X.$ If $v = Z_i$ for some row $Z_i$ of $Z,$ then $T(x,Z_i)$ is commonly called a \emph{ twistor coordinate of $X$ with respect to $Z$}~\cite[Definition 4.5]{Williams:2021zph}.

\begin{remark} \label{rem:twistor}
    One may also fix either argument $x$ or $v$ of $T$ to get a linear form on $\bP^k.$ In this paper we will consider the  twistor coordinates as functions of $x$ by setting $v=Z_i$ and denote the resulting forms by $l_i(x)$. We will see in Theorem \ref{thm:welldefgrass} that the vanishing loci of the $l_i$'s are exactly the hyperplanes which contain the boundaries of $\cG_{n,k,m}(Z)$.
\end{remark}

We now recall the definition of sign variation from \cite{KarpWilliams}.
\begin{definition}[Sign variation]
    Given a sequence $v$ of $n$ real numbers, let $\var(v)$ be the number of sign changes in $v$ (zeros are ignored).
Let $\overline{\var}(v):=\max\{\var(w):w\in\mathbb{R}^n\text{ such that }w_i =v_i\text{ for all } i\in [n]\text{ with }v_i \neq0\}$, i.e. $\overline{\var}(v)$ is the maximum number of sign changes in $v$ after a sign for each zero component is chosen. Note that both $\var$ and $\overline{\var}$ are well-defined functions of homogeneous coordinates of a point in $\bP^{n-1}$.
\end{definition}

We call a $(k-1)$-dimensional subspace of $\bP^{n-1}$ \emph{totally nonnegative} if it is a point in $\Gr_{\geq 0}(k, n)$ and \emph{totally positive} if it is a point in $\Gr_{> 0}(k, n)$.
Given a point $u \in \bP^{n-1},$ we define the \emph{hyperplane $H_u$ given by $u$} to be the hyperplane orthogonal to $u$ with respect to the standard dot product, i.e.  $H_u := \{v \in \bP^{n-1} : u \cdot v = 0\}.$ Then we have the following proposition, which has appeared in the literature with a different formulation.

\begin{proposition}[\cite{karp}, Lemma 4.1]\label{prop:signchange} Let $H_u$ be the hyperplane given by $u \in \bP^{n-1}.$
\begin{enumerate}
    \item $H_u$ contains a totally nonnegative subspace of dimension $k-1$ if and only if $\overline{\var}(u) \geq k$. 

    \item $H_u$ contains a totally positive susbspace of dimension $k-1$ if and only if $\var(u) \geq k$.
\end{enumerate}
\end{proposition}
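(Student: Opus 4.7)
The plan is to reduce the proposition to Karp's sign-variation characterization of the (totally) nonnegative Grassmannian, combined with the orthogonal-complement duality on $\Gr_{\geq 0}$. The elementary translation is that a $(k-1)$-dimensional projective subspace $\mathbb{P}(V)$ is contained in $H_u$ iff $V \subseteq u^\perp$, equivalently $u \in V^\perp$. So Part (1) asks whether $u \in V^\perp$ for some $V \in \Gr_{\geq 0}(k,n)$, and Part (2) asks the analogous question for $\Gr_{>0}(k,n)$.

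Next I would apply the complement duality: the alternating-sign involution $\sigma \colon (v_1, \ldots, v_n) \mapsto (v_1, -v_2, \ldots, (-1)^{n-1} v_n)$ satisfies $V \in \Gr_{\geq 0}(k, n)$ iff $\sigma(V^\perp) \in \Gr_{\geq 0}(n-k, n)$, and the same equivalence holds for $\Gr_{>0}$. So the question becomes: does $\sigma(u)$ lie in some $W \in \Gr_{\geq 0}(n-k, n)$ (resp.\ $\Gr_{>0}(n-k, n)$)? At this point I would invoke Karp's characterization (building on Gantmakher--Krein): a nonzero $w \in \mathbb{R}^n$ lies in some $W \in \Gr_{\geq 0}(m,n)$ iff $\var(w) \leq m-1$, and $w$ lies in some $W \in \Gr_{>0}(m,n)$ iff $\bvar(w) \leq m-1$. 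Combining this with the elementary sign identity $\bvar(u) + \var(\sigma(u)) = n - 1$ (and its $\sigma$-conjugate $\var(u) + \bvar(\sigma(u)) = n - 1$) turns the conditions $\var(\sigma(u)) \leq n-k-1$ and $\bvar(\sigma(u)) \leq n-k-1$ into $\bvar(u) \geq k$ and $\var(u) \geq k$ respectively, yielding both parts.

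The main obstacle is the converse (``extension'') direction of Karp's theorem: that a vector $w$ with $\var(w) \leq m-1$ actually belongs to \emph{some} totally nonnegative $m$-dimensional subspace (and analogously for $\bvar$ and $\Gr_{>0}$). The forward implication is the classical Gantmakher--Krein inequality, but the converse requires an explicit construction of a totally nonnegative matrix whose row span contains $w$ and respects its sign pattern; I would cite this from Karp's work (used throughout \cite{KarpWilliams}) rather than reprove it. The remaining ingredients --- the complement duality and the identity $\bvar(u) + \var(\sigma(u)) = n-1$ --- are routine and can be verified by a slot-by-slot analysis of how $\sigma$ flips the parity of sign changes between consecutive coordinates, with zero coordinates contributing the freedom that distinguishes $\var$ from $\bvar$.
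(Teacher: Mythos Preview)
Your proposal is correct and takes essentially the same approach as the paper: both reduce the containment $\mathbb{P}(V)\subset H_u$ to the condition $u\in V^\perp$ for some $V\in\Gr_{\geq 0}(k,n)$ (resp.\ $\Gr_{>0}(k,n)$) and then invoke the Gantmakher--Krein/Karp sign-variation characterization. The only cosmetic difference is that the paper cites the $V^\perp$-form of that characterization directly (\cite[Theorem 3.4]{KarpWilliams}), while you unwind it via the alternating-sign duality $\sigma$ and the identity $\bvar(u)+\var(\sigma(u))=n-1$; these are two faces of the same result.
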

We note that the first part of Proposition \ref{prop:signchange} is also equivalent to \cite[Corollary 6.7]{ayala2004control}. 
\begin{proof}
We derive this from \cite[Lemma 4.1]{karp}, which states that, for $v \in \mathbb{P}^{n-1}$,  there exists an element of $\Gr_{\geq 0}(k, n)$ (resp. $\Gr_{>0}(k, n)$) containing $v$ if and only if $\var(v)\leq k -1$ (resp. $\overline{\var}(v) \leq k-1$). We can transform this into the first (resp. the second) statement of our proposition using \cite[Lemma 1.11]{karp} as follows. The hyperplane $H_u$ contains $V \in \Gr_{\geq 0}(k, n)$ if and only if $u \in V^\perp$ for some $V \in \Gr_{\geq 0}(k, n)$, where $V^\perp$ denotes the orthogonal complement of $V$. By \cite[Lemma 1.11, (ii)]{karp} this happens if and only if $\alt(u) \in \alt(V^\perp) \in \Gr_{\geq 0}(n-k, n)$, where $\alt(v) = [v_0: -v_1: v_2 \dots :(-1)^{k-1}v_{k-1}]$ for any $v \in \mathbb{P}^{k-1}$. By \cite[Lemma 4.1]{karp} this is equivalent to $\var(\alt(u)) \leq n-k-1$. Finally, \cite[Lemma 1.11, (ii)]{karp} tells us that $\overline{\var}(u) + \var(\alt(u))=n-1$, so we obtain equivalence with $\overline{\var}(u) \geq k$. The second statement follows similarly, replacing $\Gr_{\geq 0}$ with $\Gr_{>0}$ and $\overline{\var}$ with $\var$ where appropriate.
 
\end{proof}

\section{Grasstopes for \texorpdfstring{$m=1$}~: Tame, Wild, and Rational} \label{sec:meq1}

We begin by stating our main theorem which describes any $m=1$ Grasstope $\mathcal{G}_{n, k, 1}(Z)$ arising from a well-defined map $\Tilde{Z}$ as a subset of $\bP^k \cong \Gr(k,k+1)$. This theorem recovers and generalizes some of the results of Karp and Williams describing $m=1$ amplituhedra and tame Grasstopes (\cite[Section 6]{KarpWilliams}).
 
\begin{theorem}\label{thm:welldefgrass}
    Suppose $\Tilde{Z}: \Gr(k, n) \dashrightarrow \Gr(k, k+1)$ is well-defined on $\Gr_{\geq 0}(k, n)$. Then the Grasstope $\mathcal{G}_{n, k, 1}(Z)$ consists of the points $\mathbf{x}\in \bP^k$ such that $\overline{\var}(L(\mathbf{x})) \geq k$, where $L(\mathbf{x})$ is the vector of twistor coordinates of $\mathbf{x}$ with respect to $Z.$
\end{theorem}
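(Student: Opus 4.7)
The plan is to reduce each inclusion to Proposition \ref{prop:signchange} via a dictionary between preimages of $\mathbf{x}$ under $\tilde{Z}$ and positive subspaces contained in a certain hyperplane. The key identity is the following: a point $\mathbf{x} \in \bP^k \cong \Gr(k,k+1)$ corresponds to a $k$-dimensional subspace $X \subset \mathbb{R}^{k+1}$ cut out by a (unique up to scalar) normal vector $n_X \in \mathbb{R}^{k+1}$, whose entries are (signed) Pl\"ucker coordinates of $\mathbf{x}$. Unwinding the definition of the twistor $T$ from Remark \ref{rem:twistor} then shows $L(\mathbf{x}) = Z n_X$ as a vector in $\mathbb{R}^n$. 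Consequently, for any $k \times n$ matrix $A$, the condition $A \cdot L(\mathbf{x}) = 0$ (equivalently, the rows of $A$ lie in the hyperplane $H_{L(\mathbf{x})} \subset \bP^{n-1}$) is equivalent to $(AZ)\, n_X = 0$, i.e.\ the row span of $AZ$ is contained in $X$.

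The forward inclusion $\mathcal{G}_{n,k,1}(Z) \subseteq \{\mathbf{x} : \overline{\text{var}}(L(\mathbf{x})) \geq k\}$ is immediate from this dictionary: if $\mathbf{x} = \tilde{Z}([A])$ with $[A] \in \Gr_{\geq 0}(k,n)$, then the row span of $AZ$ is $X$, so the positive $(k-1)$-subspace $[A]$ lies in $H_{L(\mathbf{x})}$, and Proposition \ref{prop:signchange}(1) gives $\overline{\text{var}}(L(\mathbf{x})) \geq k$. For the reverse inclusion, suppose $\overline{\text{var}}(L(\mathbf{x})) \geq k$; note first that $L(\mathbf{x}) \neq 0$, since $Z$ has full rank $k+1$ and its rows cannot all lie in the proper subspace $X$, so $H_{L(\mathbf{x})}$ is a genuine hyperplane. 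Proposition \ref{prop:signchange}(1) then produces some $[A] \in \Gr_{\geq 0}(k,n)$ contained in $H_{L(\mathbf{x})}$, and the identity puts the row span of $AZ$ inside $X$. This is precisely where the hypothesis enters: since $\tilde{Z}$ is well-defined on $\Gr_{\geq 0}(k,n)$, the matrix $AZ$ has full rank $k$, so its row span must equal the $k$-dimensional subspace $X$, giving $\tilde{Z}([A]) = \mathbf{x}$.

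I expect the only real technical work to be the careful verification of the twistor identity $L(\mathbf{x}) = Z n_X$ with the correct Pl\"ucker sign conventions; after this, both directions follow essentially formally from Proposition \ref{prop:signchange}, with the well-definedness assumption entering only at the final rank-checking step of the reverse direction. It is worth emphasizing that the well-definedness hypothesis is used in exactly one place, and dropping it is precisely what will force the more delicate analysis in the rational case later in Section \ref{sec:meq1}.
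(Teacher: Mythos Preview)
Your proof is correct and follows the same overall strategy as the paper---reduce both inclusions to Proposition~\ref{prop:signchange} via the equivalence ``$[A]\subset H_{L(\mathbf{x})}$ iff the row span of $AZ$ lies in $X$''---but your reverse inclusion takes a more direct route. The paper argues in $\bP^{n-1}$: it first checks separately that $H_{L(\mathbf{x})}$ always contains $\bP(\ker Z^T)$, then uses the well-definedness hypothesis to see that the positive subspace $A$ and $\bP(\ker Z^T)$ are disjoint, so by a dimension count they jointly determine the hyperplane uniquely; comparing with $H_{L(\mathbf{a})}$ for $\mathbf{a}=[AZ]$ then forces $L(\mathbf{x})=L(\mathbf{a})$ and hence $\mathbf{x}=\mathbf{a}$. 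You instead work in $\mathbb{R}^{k+1}$ via the normal vector $n_X$: the identity $L(\mathbf{x})=Zn_X$ turns $A\,L(\mathbf{x})=0$ directly into $(AZ)n_X=0$, i.e.\ the row span of $AZ$ sits inside $X$, and the rank hypothesis upgrades this to equality. Your argument is shorter and avoids the $\ker Z^T$ detour altogether; the paper's version has the compensating virtue of making the role of $\bP(\ker Z^T)$ explicit, which is exactly the object that causes trouble in the rational case and reappears in Lemma~\ref{lem:wiggle} and Proposition~\ref{thm:opengrass}. Your explicit check that $L(\mathbf{x})\neq 0$ (so that $H_{L(\mathbf{x})}$ is a genuine hyperplane) is a point the paper leaves implicit.
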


\begin{proof}

Let $L(\fx)$ = $(l_1(\fx): ... : l_n(\fx)) \in \bP^{n-1}$ (see Remark \ref{rem:twistor}). By Proposition \ref{prop:signchange}, it suffices to show that $H_{L(\fx)}$ contains a totally nonnegative subspace if and only if $X \in \mathcal{G}_{n,k,1}(Z)$.

For the ``if" direction, suppose that $A \in \Gr_{\geq 0}(k, n)$ and $\fx$ is the vector of Pl\"ucker coordinates of~$[AZ].$ Then for each row $A_i$ of $A,$ we have
$$L(\fx) \cdot A_i = \sum_{j=1}^n T(\fx, Z_j) A_{ij} = T(\fx, \sum_{j=1}^n A_{ij}Z_j) = 0,$$ 
since $\sum A_{ij}Z_j$ is a row of $AZ.$ Thus $H_{L(\fx)}$ contains $A.$

For the ``only if" direction, suppose that $H_{L(\fx)}$ contains a totally nonnegative subspace $A$, and let $v \in \ker Z^T.$ Then 
$$L(\fx) \cdot v = \sum_{i=1}^n T(\fx,Z_i)v_i = T(\fx, \sum_{i=1}^n Z_iv_i) = T(\fx, 0) = 0.$$
So $H_{L(\fx)}$ contains $\bP(\ker Z^T)$. Since $\bP(\ker Z^T) \cap A = \emptyset$ by regularity of $\Tilde{Z}$ on $\Gr_{\geq 0}(k, n),$ we have by dimension considerations that any two hyperplanes containing $A$ and $\bP(\ker Z^T)$ must be equal. However, by the ``if" direction, if $\mathbf{a}$ is the vector of Pl\"ucker coordinates of $[AZ],$ then $H_{L(\mathbf{a})}$ contains $A$. In addition,  $H_{L(\mathbf{a})}$ contains $\bP(\ker Z^T)$. Therefore $H_{L(\fx)} = H_{L(\mathbf{a})}.$ Since $u \mapsto H_u$ is injective, we obtain $\fx = \mathbf{a}.$
    
\end{proof}

Note that in the proof of Theorem \ref{thm:welldefgrass}, the well-defined condition is necessary, since otherwise there might be totally nonnegative subspaces which intersect $\bP(\ker Z^T)$. 
In this case, there is no guarantee that a hyperplane containing a totally nonnegative subspace also contains a totally nonnegative subspace disjoint from $\bP(\ker Z^T)$, which might present problems on the boundary of the Grasstope. 

 However, as we show in the following results, we can still describe the open rational Grasstope in the case that the map $\Tilde{Z}$ is not well-defined on $\Gr_{\geq 0}(k, n)$. We first need the following lemma about totally positive subspaces to show that intersection with $\bP(\ker Z^T)$ does not cause issues.

\begin{lemma} \label{lem:wiggle}
     Let $H$ be a hyperplane in $\bP^{n-1}$ containing an $(n-k-2)$-dimensional subspace $P$. If $H$ contains a totally positive $(k-1)$-dimensional subspace, then it contains a totally positive $(k-1)$-dimensional subspace disjoint from $P$.
\end{lemma}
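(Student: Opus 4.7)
My plan is a density argument in the Grassmannian $G_H$ of $k$-dimensional vector subspaces of the $(n-1)$-dimensional vector space underlying $H$ (equivalently, of $(k-1)$-dimensional projective subspaces of $H$). Concretely, $G_H \cong \Gr(k, n-1)$ sits as a closed subvariety of $\Gr(k,n)$. The idea is to show that inside $G_H$ the totally positive subspaces form a nonempty open subset while those meeting $P$ form a proper closed subvariety; the conclusion will then follow from density.

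The open piece $S := G_H \cap \Gr_{>0}(k,n)$ is open in $G_H$ because $\Gr_{>0}(k,n)$ is carved out of $\Gr(k,n)$ by strict sign inequalities on Pl\"ucker coordinates. By hypothesis $S$ is nonempty. The closed piece $T_P := \{V \in G_H : V \cap P \neq \{0\}\}$ is closed, since it is cut out by the vanishing of minors built from bases of $V$ and $P$. A dimension count shows that $T_P$ is \emph{proper}: the vector-space dimensions of $V$ (namely $k$) and $P$ (namely $n-k-1$) sum to $n-1 = \dim_{\text{vec}} H$, so a generic $V \in G_H$ spans $H$ together with $P$ and meets $P$ only at the origin, i.e.\ is projectively disjoint from $P$.

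Because $G_H$ is irreducible, the proper closed subvariety $T_P$ has open dense complement in $G_H$, and this complement therefore meets the nonempty open subset $S$. Any element of $S \cap (G_H \setminus T_P)$ furnishes a totally positive $(k-1)$-dimensional subspace of $H$ disjoint from $P$, completing the argument.

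The only delicate step is the dimension bookkeeping that confirms $T_P$ is proper rather than all of $G_H$; once this is in place the rest is standard density reasoning. A minor secondary concern is keeping projective and vector-space dimensions straight throughout the computation, but this is purely mechanical.
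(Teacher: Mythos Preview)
Your argument is correct and takes a genuinely different route from the paper. The paper gives an explicit perturbation: starting from a totally positive $V\subset H$ with $\dim(V\cap P)=l$, it picks points $q_1,\ldots,q_{l+1}$ in general position in $H\setminus(V+P)$, replaces the rows of a matrix for $V$ that lie in $V\cap P$ by $r_i+\varepsilon_i\tilde q_i$, and then verifies by a direct rank computation that for small $\varepsilon_i$ the resulting $V'$ is still totally positive and now satisfies $V'\cap P=\{0\}$. Your proof bypasses this construction entirely: inside $G_H\cong\Gr(k,n-1)$ the totally positive locus $S$ is a nonempty Euclidean-open set while the incidence locus $T_P$ is a proper Zariski-closed (Schubert-type) subvariety, hence nowhere dense, so $S\setminus T_P\neq\emptyset$. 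The paper's approach is constructive and would let one control how close $V'$ is to $V$; yours is shorter and makes clear that the lemma is really a genericity statement.

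Two small remarks on the points you flag. First, your reading that $P$ has \emph{vector-space} dimension $n-k-1$ is the one that makes the lemma non-vacuous and matches its use with $P=\bP(\ker Z^T)$ for $Z$ of size $n\times(k+1)$; with that reading your count $k+(n-k-1)=n-1=\dim_{\mathrm{vec}}H$ is exactly what shows $T_P\subsetneq G_H$. Second, you are mixing topologies ($S$ is Euclidean-open, $T_P$ is Zariski-closed), so the step ``irreducible $\Rightarrow$ complement dense'' should be read as: a proper Zariski-closed subset of the smooth connected real manifold $G_H(\bR)$ has empty Euclidean interior, hence its complement is Euclidean-dense and meets every nonempty Euclidean-open set. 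This is standard and your argument goes through.
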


\begin{proof}
Being a full-dimensional subset of $\mathrm{Gr}(k,n)$ defined by finitely many strict inequalities, the totally positive Grassmannian $\mathrm{Gr}_{> 0}(k,n)$ is an open subset of $\mathrm{Gr}(k,n)$. The set $S_H$ of all $(k-1)$-dimensional subspaces contained in $H$ is a Schubert variety in $\mathrm{Gr}(k,n)$ (in particular, it is closed), and $S_H \cap \mathrm{Gr}_{> 0}(k,n)$ is thus open in $S_H$. The set $S_P$ of $(k-1)$-dimensional subspaces meeting $P$ is also a Schubert variety that intersects $S_H$ non-trivially. The intersection $S_P\cap S_H$ is a closed subvariety of $S_H$. This implies that the complement $(S_H \cap \mathrm{Gr}_{> 0}(k,n)) \setminus (S_H\cap S_P)$ is non-empty, which proves the claim.
\end{proof}

Following Lemma \ref{lem:wiggle}, we are ready to describe the open rational Grasstope $\Tilde{Z}(\Gr_{>0}(k, n))$. Recall that $\cB(\Tilde{Z})$ denotes the set of base points of $\Tilde{Z}$.

\begin{proposition}\label{thm:opengrass}
    For a map $\Tilde{Z}: \Gr_{\geqslant 0}(k, n) \dashrightarrow \Gr(k, k+1)$ given by a matrix $Z$ the open Grasstope $\Tilde{Z}(\Gr_{>0}(k, n))$ consists of the points  $\mathbf{x}\in\bP^k$ such that $\var(L(\mathbf{x})) \geq k$, where $L(\mathbf{x})$ is the vector of twistor coordinates of $\mathbf{x}$ with respect to $Z.$
\end{proposition}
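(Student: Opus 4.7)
The plan is to adapt the proof of Theorem \ref{thm:welldefgrass}, replacing $\bvar$ by $\var$ and invoking part (2) of Proposition \ref{prop:signchange} in place of part (1); the new ingredient, needed to handle the base locus, is Lemma \ref{lem:wiggle}. For the ``if'' direction I would take $A \in \Gr_{>0}(k,n)\setminus \cB(\Tilde{Z})$ and let $\fx$ be the Pl\"ucker coordinates of $[AZ]$. The same computation as in Theorem \ref{thm:welldefgrass},
$$ L(\fx)\cdot A_i \;=\; \sum_{j=1}^n T(\fx, Z_j)\, A_{ij} \;=\; T\!\left(\fx,\, \sum_{j=1}^n A_{ij} Z_j\right) \;=\; 0, $$
shows that $H_{L(\fx)}$ contains the totally positive subspace $A$, and Proposition \ref{prop:signchange}(2) yields $\var(L(\fx))\geq k$.

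For the ``only if'' direction I would begin with $\var(L(\fx))\geq k$ and apply Proposition \ref{prop:signchange}(2) to produce a totally positive $(k-1)$-dimensional subspace $V \subset H_{L(\fx)}$. The same kernel argument as in Theorem \ref{thm:welldefgrass} (namely $L(\fx)\cdot v = T(\fx, \sum_i Z_i v_i) = 0$ for $v\in \ker Z^T$) still gives $\bP(\ker Z^T)\subset H_{L(\fx)}$. The new difficulty is that $V$ may meet $\bP(\ker Z^T)$, in which case $\Tilde{Z}$ is undefined on $V$. This is exactly where Lemma \ref{lem:wiggle} is invoked, with $P = \bP(\ker Z^T)$, producing a totally positive $V' \subset H_{L(\fx)}$ disjoint from the base locus. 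Then $\Tilde{Z}(V')$ is defined, and letting $\mathbf{a}$ denote the Pl\"ucker coordinates of $[\Tilde{Z}(V')]$, the ``if'' direction yields $H_{L(\mathbf{a})} \supseteq V'$, while the kernel computation again gives $H_{L(\mathbf{a})} \supseteq \bP(\ker Z^T)$. Because $V'$ and $\bP(\ker Z^T)$ are disjoint and of complementary dimension inside any hyperplane containing them, they determine that hyperplane uniquely, so $H_{L(\fx)} = H_{L(\mathbf{a})}$; injectivity of $u \mapsto H_u$ then gives $\fx = \mathbf{a}\in \Tilde{Z}(\Gr_{>0}(k,n))$.

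The essential new content beyond Theorem \ref{thm:welldefgrass} is the wiggle step forcing the totally positive subspace furnished by the sign-variation criterion to avoid the base locus, and I expect this to be the only nontrivial part of the argument. It is also the reason the statement concerns $\Gr_{>0}$ rather than $\Gr_{\geq 0}$: strict positivity leaves room to perturb $V$ while remaining totally positive, whereas on the boundary of $\Gr_{\geq 0}$ an analogous perturbation might leave the nonnegative Grassmannian entirely, consistent with the fact that the full rational Grasstope may have irregular boundary.
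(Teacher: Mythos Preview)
Your proposal is correct and follows essentially the same approach as the paper: both directions mirror Theorem \ref{thm:welldefgrass} with Proposition \ref{prop:signchange}(2) in place of (1), and the only new ingredient is invoking Lemma \ref{lem:wiggle} with $P=\bP(\ker Z^T)$ to replace the totally positive subspace by one disjoint from the base locus before running the uniqueness-of-hyperplane argument. Your write-up is in fact more explicit than the paper's, which compresses the same steps into a single paragraph.
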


\begin{proof}
    We will prove this statement by slightly modifying the proof of Theorem \ref{thm:welldefgrass}.  
    Fix a totally positive matrix $A \in \Gr_{>0}(k,n)\setminus \cB(\Tilde{Z})$ and let $\mathbf{x}$ be the vector of Pl\"ucker coordinates of $[AZ].$ Then the hyperplane $H_{L(\mathbf{x})}$ contains $\bP(\ker Z^T)$ and the totally positive subspace $A$. Conversely, if a hyperplane $H$ contains $\bP(\ker Z^T)$ and a totally positive subspace $A$, then by Lemma \ref{lem:wiggle} $H$ contains a totally positive subspace $A'$ disjoint from $\bP(\ker Z^T)$ (thus, $A'\not\in \cB(\Tilde{Z})$).
    The hyperplane is then uniquely determined by its containment of $\bP(\ker Z^T)$ and $A'$, so it must be $H_{L(\mathbf{x})},$ where $\mathbf{x}$ is the vector of Pl\"ucker coordinates of $[A'Z]$.
    We now use the second part of Proposition \ref{prop:signchange} to conclude that the open Grasstope $\Tilde{Z}(\Gr_{>0}(k, n))$ consists of the points in $\mathbf{x}\in\bP^k$ such that $\var(L(\mathbf{x})) \geq k$.
\end{proof}

The following proposition is an analog of \cite[Lemma 9.4]{parity} for $m=1$ Grasstopes.
\begin{proposition}
\label{prop:opengrass}
    The open rational Grasstope of $Z$ is open and the rational Grasstope of $Z$ is contained in the closure of the open rational Grasstope of $Z.$
\end{proposition}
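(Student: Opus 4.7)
The plan is to treat the two assertions of the proposition separately: openness follows from Proposition \ref{thm:opengrass} together with an elementary property of $\var$, while the closure containment is a density-and-continuity argument.

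For openness, Proposition \ref{thm:opengrass} realizes the open rational Grasstope as $L^{-1}(U)$, where $L = (l_1 : \cdots : l_n) : \bP^k \to \bP^{n-1}$ is the morphism of twistor coordinates and $U = \{u \in \bP^{n-1} : \var(u) \geq k\}$. The map $L$ is defined on all of $\bP^k$: if the $l_i$ had a common zero at some $\mathbf{x}$, the hyperplane corresponding to $\mathbf{x}$ would contain every row of $Z$, hence the full $(k+1)$-dimensional row span of $Z$, contradicting the fact that this hyperplane has dimension $k$. The set $U$ is open because the $k$ sign changes witnessing $\var(u) \geq k$ are strict inequalities $u_i u_j < 0$ on nonzero entries, which are preserved under small perturbations (and any zero entry perturbed to become nonzero can only increase the count). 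Therefore $\mathcal{G}^\circ_{n,k,1}(Z)$ is the preimage of an open set under a continuous map and is itself open.

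For the closure containment, let $\mathbf{x} = \Tilde{Z}(A) \in \mathcal{G}_{n,k,1}(Z)$ with $A \in \Gr_{\geq 0}(k,n) \setminus \cB(\Tilde{Z})$. The totally positive Grassmannian $\Gr_{>0}(k,n)$ is dense in $\Gr_{\geq 0}(k,n)$ by the standard theory of the totally nonnegative Grassmannian, so there is a sequence $A_\nu \in \Gr_{>0}(k,n)$ with $A_\nu \to A$. Since $\cB(\Tilde{Z})$ is closed in $\Gr(k,n)$ and $A \notin \cB(\Tilde{Z})$, eventually $A_\nu \in \Gr_{>0}(k,n) \setminus \cB(\Tilde{Z})$. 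By continuity of $\Tilde{Z}$ on $\Gr(k,n) \setminus \cB(\Tilde{Z})$, the images $\Tilde{Z}(A_\nu)$ converge to $\Tilde{Z}(A) = \mathbf{x}$, placing $\mathbf{x}$ in the closure of the open rational Grasstope.

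The main subtlety I anticipate is identifying precisely where the hypothesis ``$Z$ has no zero rows'' is invoked. The density-plus-continuity scheme above does not obviously require it, so I would either (a) verify that it enters through a more explicit construction of the $A_\nu$ — for instance, perturbing a representative of $A$ by a small multiple of a totally positive matrix, where a zero row $Z_i = 0$ causes the corresponding twistor form $l_i$ to vanish identically on $\bP^k$ and could interfere with how $\Tilde{Z}(A_\nu)$ behaves near the boundary in projective coordinates — or (b) conclude that the hypothesis is stronger than necessary at this level of generality and note that the statement may be sharpened. I would check both possibilities carefully before finalizing the argument.
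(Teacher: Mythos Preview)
Your openness argument is essentially the paper's, phrased as a preimage; the explicit verification that $L$ is globally defined on $\bP^k$ is a nice addition the paper leaves implicit.

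For the closure containment you take a genuinely different route. The paper argues entirely in the target: it shows (i) the rational Grasstope lies in $C := \{\mathbf{x} : \overline{\var}(L(\mathbf{x})) \geq k\}$ via the first part of Proposition~\ref{prop:signchange}, (ii) $C$ is closed, and (iii) every point of $C$ is a limit of points in the open rational Grasstope, so $C$ equals that closure. Step (iii) is where ``no zero rows'' enters: each zero coordinate $l_i(\mathbf{x}) = 0$ must correspond to an honest hyperplane $\{l_i = 0\}$ in $\bP^k$, so that nearby points can realize either sign in that slot; if $Z_i = 0$ then $l_i \equiv 0$ and this perturbation argument breaks.

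Your argument instead works in the source: approximate $A \in \Gr_{\geq 0}(k,n) \setminus \cB(\Tilde{Z})$ by totally positive $A_\nu$ using density of $\Gr_{>0}$ in $\Gr_{\geq 0}$, note that the closed base locus is eventually avoided, and push forward by continuity of $\Tilde{Z}$ away from $\cB(\Tilde{Z})$. This is correct, and, as you anticipated in your final paragraph, it does not use the ``no zero rows'' hypothesis at all. So for the literal statement of the proposition your proof is both shorter and slightly more general. What the paper's route buys in exchange is the explicit identification of the closure as the sign-variation set $C$ --- a description in the same language as Theorem~\ref{thm:welldefgrass} and Proposition~\ref{thm:opengrass}, and one that feeds into the region-counting discussion of Section~\ref{sec:top} --- which your density argument does not yield on its own.
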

\begin{proof}
    Let $Z$ define a rational Grasstope and consider a point $\mathbf{x}$ with $\var(L(\mathbf{x})) \geq k$, where $L(\mathbf{x})$ is the vector of twistor coordinates of $\mathbf{x}$ with respect to $Z$. Note that each zero row of $Z$ results in an entry of $L(\mathbf{x})$ that is zero for every $\mathbf{x}$. Since deleting zero entries from $L(\mathbf{x})$ does not change $\var$, we may assume w.l.o.g. that $Z$ has no zero rows.  
    Then for all points $\mathbf{x'}$ in a  sufficiently small neighborhood around it, $L(\mathbf{x'})$ has the same signs as $L(\mathbf{x})$ in all the indices of the nonzero entries of $L(\mathbf{x})$. Since changing the zero entries cannot decrease the sign variation, the neighborhood is contained in the open rational Grasstope.

    Similarly to the reasoning of Theorem \ref{thm:welldefgrass}, it follows from the second part of Proposition \ref{prop:signchange} that the rational Grasstope of $Z$ must be contained in the set $C := \{\mathbf{x}: \overline{\var}(L(\mathbf{x})) \geq k$\}. We show that $C$ is the closure of the open rational Grasstope of $Z$. First, we show that the complement of $C$, the set $\{\mathbf{x}: \overline{\var}(L(\mathbf{x})) < k\},$ is open. Let $\mathbf{x}$ be such that $\overline{\var}(L(\mathbf{x})) < k$. Then for all points $\mathbf{x'}$ in a  sufficiently small neighborhood around $\mathbf{x}$, $L(\mathbf{x'})$ has the same signs as $L(\mathbf{x})$ in all the indices of the nonzero entries of $L(\mathbf{x})$. Since changing the values of the zero entries cannot increase $\overline{\var}$, we know $\overline{\var}(L(\mathbf{x'}))< k$. Therefore $C$ is closed. 
    
    Now consider a point $\mathbf{x} \in C$ and an open neighborhood $N$ of points around it. We will show that there is some $\mathbf{x'} \in N$ with $\var(L(\fx')) \geq k$, which is sufficient to conclude that $C$ is the closure of the open rational Grasstope of $Z$. Since we may assume that $Z$ has no zero rows, each zero entry of $L(\mathbf{x})$ corresponds to containment of $\mathbf{x}$ in a hyperplane. Any open neighborhood around $\mathbf{x}$ contains points on either side of the hyperplane. Similarly, if $\mathbf{x}$ lies in the intersection of several hyperplanes, any open neighborhood of $\mathbf{x}$ contains points in each orthant defined by these hyperplanes, so any sign pattern can be achieved in the entries which are zero in $L(\mathbf{x})$. In particular, this means that there is a point $\mathbf{x'}$ with signs in the nonzero entries of $L(\mathbf{x})$ equal to the signs of the corresponding entries of $L(\mathbf{x'})$ (since we can restrict $N$ to be small enough such that the signs in the nonzero entries of $L(\mathbf{x})$ are unchanged), and signs of the zero entries replaced by the signs ensuring that $\var(L(\mathbf{x'})) \geq k$.
    
\end{proof}

 In Section \ref{sec:examples}, we show that the rational Grasstope may be equal to the closure of the open rational Grasstope. However, we do not know if this holds in general, since as far as we can tell, it might be possible for points in the boundary to fall out. Thus the question of fully describing which parts of the boundary are contained in $m=1$ rational Grasstopes remains open.

\section{Examples} \label{sec:examples}

In this section we provide examples of the families of Grasstopes we considered. We begin with an example of a tame Grasstope.

\begin{example}[A tame Grasstope]\label{eg:tame} 
    Let $$Z = \begin{bmatrix} 1 & 0 & -1 & -3 & -2\\ 0 & 1 & 1 & 2 & 1\\ 0 & 0 & 1 & -1 & -2\end{bmatrix}^T.$$
    This matrix is not totally positive, since $p_{123} = 1$ and $p_{124} = -1$. However, the first two rows of $Z^T$ span a totally positive line, so $Z$ satisfies \eqref{cond25} with the matrix $M$ being 
    $$\begin{bmatrix}
        1 & 0 & 0\\
        0 & 1 & 0
    \end{bmatrix}^T.$$
Therefore the resulting Grasstope is tame but is not an amplituhedron. 
The rows of $Z$ define $5$ linear forms, as noted in Remark \ref{rem:twistor}:
$$l_1 = z, \quad l_2 = -y, \quad l_3 = x-y-z,\quad l_4 = -x-2y-3z,\quad l_5 = -2x-y-2z.$$

Not every affine chart that we choose results in a bounded picture. For instance, if we map $(x:y:z) \mapsto (x+y: y: z)$ and dehomogenize with respect to the first coordinate, the resulting picture is unbounded. However, as predicted by Corollary \ref{cor:tame}, there are lines disjoint from the Grasstope. One of them is $\{-4x+z=0\}$ in $\bP^2$. By picking it to be the line at infinity (that is, by mapping $(x:y:z)\mapsto(-4x+z:y:z)$ and dehomogenizing with respect to the first coordinate), we obtain affine lines given by the linear forms
$$\Tilde{l}_1 = \Tilde{y}, \quad \Tilde{l}_2 = -\Tilde{x}, \quad \Tilde{l}_3 = \frac{-1-4\Tilde{x}-3\Tilde{y}}{4},\quad \Tilde{l}_4 = \frac{1-8\Tilde{x}-13\Tilde{y}}{4},\quad \Tilde{l}_5 = \frac{1-2\Tilde{x}-5\Tilde{y}}{2}.$$ 

Each line has an orientation, with the positive half-space given by the points $(\Tilde{x}, \Tilde{y})$ for which $\Tilde{l}(\Tilde{x}, \Tilde{y}) \geq 0$. The lines divide the affine plane into regions, each of which has a corresponding sign vector with $i^{th}$ coordinate being $+$ if $\Tilde{l}_i(\Tilde{x}, \Tilde{y})> 0$ for all $(\Tilde{x}, \Tilde{y})$ in the region, and $-$ if $\Tilde{l}_i(\Tilde{x}, \Tilde{y})< 0$.
The Grasstope of $Z$ consists exactly of those points in the regions for which $\overline{\var}(u) \geq 2$, as can be seen in Figure \ref{fig:tame2}.
\begin{figure}
    \centering
    \includegraphics[scale=0.3]{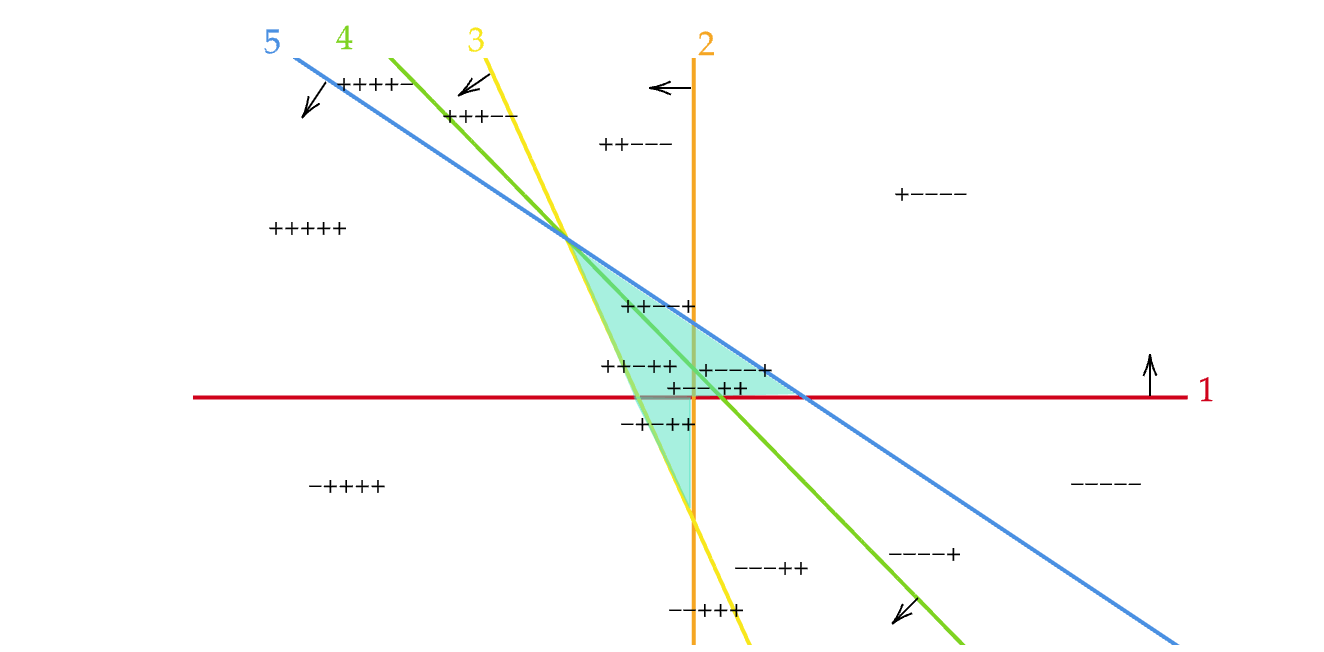}
    \caption{Affine chart in which the tame Grasstope is bounded. The six lines corresponding to the rows of $Z$ are colored red, orange, yellow, green, and blue, in order, with orientations given by arrows. The shaded portion of the figure is the Grasstope, which consists exactly of the regions with at least two sign changes.}
    \label{fig:tame2}
\end{figure}
\end{example}

\begin{example}[A wild Grasstope]\label{eg:wild}
Let $$Z = \begin{bmatrix}  2 & 2 & 0 & -1 & 1 & 0\\ 2 & 3 & 1 & 0 & 2 & 0\\ 1 & 2 & 0 & 0 & 2 & 1\end{bmatrix}^T.$$
This is the example found by Galashin (and communicated to us by Lam \cite{LamLetter}) to show that wild Grasstopes exist. Indeed, suppose there exists a $3 \times 2$ matrix $M$ such that $ZM$ has positive $2 \times 2$ minors. The $2 \times 2$ minors of $ZM$ are the entries of $\wedge^2(ZM) = \wedge^2(Z)\times \wedge^2(M) = $
\[ = \begin{bmatrix}
 2 &2 & 2&  2& 3& 1 &2& 1& -1 &-2 & 0 & 0& 0& 0&  0 \\
      2&  0&  0&  1& 2& 0& 3& 2& 0 & -2& 2& 2& 0& -1& 1 \\
 1& -1& -2& 0& 0& 0 &2 &2& 2 & 0 & 2 &3&1& 0&  2 
\end{bmatrix}^T \begin{bmatrix}
    p_{12}\\ p_{13}\\ p_{23}
\end{bmatrix},\]
where $p_{12}, p_{13}, p_{23}$ are the minors of $M$. The entries of $(\wedge^2 Z)^T$ are written here with respect to the reverse lexicographic order: column 3 corresponds to the $2\times2$ minors of the submatrix of $Z^T$ obtained by taking columns 2 and 3, while column 10 of $(\wedge^2 Z)^T$ corresponds to columns 4 and 5. Now observe that, if we suppose the $2\times 2$ minors of $ZM$ are positive, then column 3 of $(\wedge^2 Z)^T$ tells us that $p_{12} - p_{23} > 0$, column 10 tells us that $p_{12}+p_{13} < 0$, while column 11 tells us that $p_{13}+p_{23} > 0$. All three cannot be true, so no such matrix $M$ can exist.

The six rows of $Z$ correspond to the six linear forms
\begin{align*}
    l_1 &= x-2y+2z,& l_2 &= 2x-3y+2z, &l_3 &= -y\\
    l_4 &= -z,&l_5 &= 2x-2y+z,& l_6 &= x.
\end{align*}
Mapping $(x:y:z) \mapsto (x+y: y: z)$ and dehomogenizing with respect to the first coordinate, we obtain affine lines given by the linear forms
\begin{align*}
    \Tilde{l}_1 &= 2\Tilde{y}-3\Tilde{x}+1, &\Tilde{l}_2 &= 2\Tilde{y}-5\Tilde{x}+2, &\Tilde{l}_3 &= -\Tilde{x},\\
    \Tilde{l}_4 &= -\Tilde{y}, &\Tilde{l}_5 &= \Tilde{y}-4\Tilde{x}+2, & \Tilde{l}_6 &= -\Tilde{x}+1.
\end{align*}
We draw these in the affine plane and color them (in order) red, orange, yellow, green, blue, and purple. We also give the lines orientations with the positive half-space given by the points $(\Tilde{x}, \Tilde{y})$ for which
$\Tilde{l}(\Tilde{x}, \Tilde{y}) > 0$. Then the Grasstope of $Z$ consists exactly of those points in the regions between the lines for which $\overline{\var}(u) \geq 2$,  as can be seen in Figure \ref{fig:wild1}.

\begin{figure}[htbp]
    \centering
\includegraphics[scale=0.35]{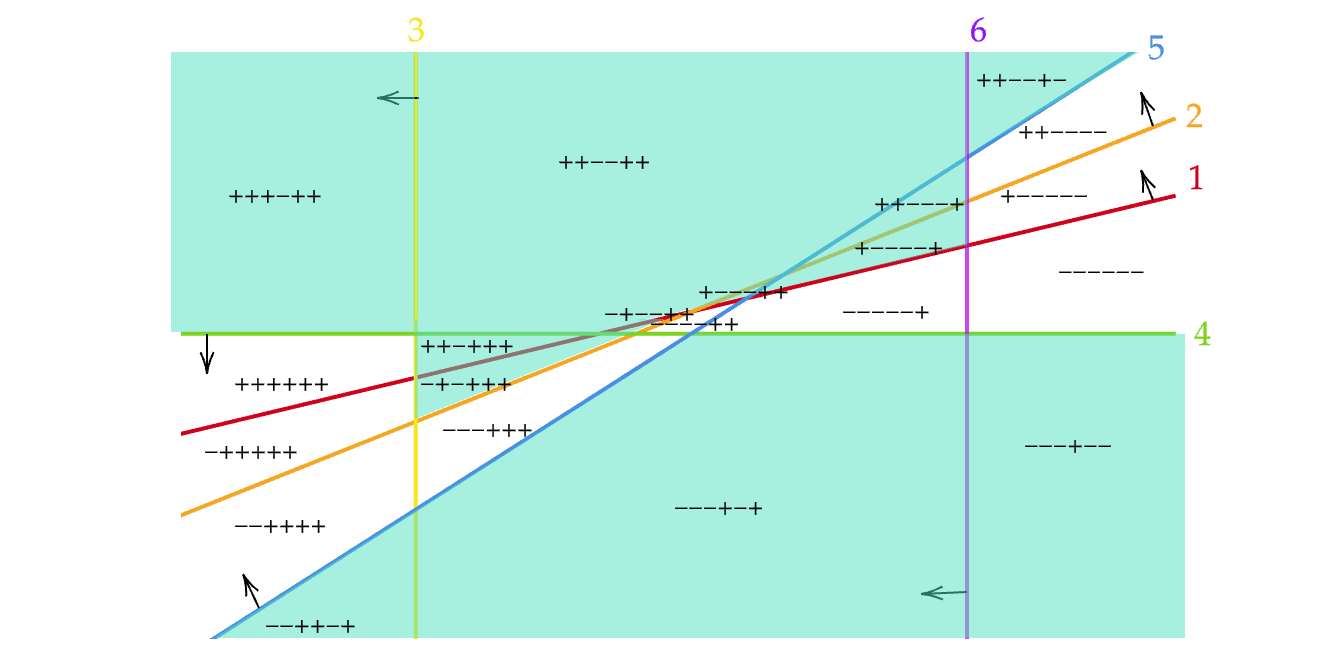}
    \caption{The six lines corresponding to the rows of $Z$ are pictured, with orientations given by the arrows. The regions can then be labelled by sign patterns. The shaded portion of the figure is the Grasstope, and it consits exactly of those regions with at least two sign changes.}
    \label{fig:wild1}
\end{figure}
\end{example}

\begin{example}[A rational Grasstope with closed boundary and M\"obius strip topology]\label{{eg:ratlclosed}}

Let \[Z = \begin{bmatrix}
   1 & 0 & 0 & -1 & 0 & 0\\
   0 & 1 & 0 & -1 & 1 & -1\\
   0 & 0 & 3 & 0 & -2 & -1
\end{bmatrix}^T.
\] 

Note that $(1, 1, 1, 1, 1, 1) \in \ker(Z^T)$, so the map $\Tilde{Z}$ has base points on $\Gr_{\geq 0}(2, 6)$. Following Proposition \ref{thm:opengrass} we can still describe its open Grasstope. As in the previous examples, we find 6 dehomogenized linear forms corresponding to six affine lines
\begin{align*}
    \Tilde{l}_1 &= \Tilde{y},&
    \Tilde{l}_2 &= -\Tilde{x},&
    \Tilde{l}_3 &= -3\Tilde{x}+3,\\
    \Tilde{l}_4 &= \Tilde{x}-\Tilde{y},&
    \Tilde{l}_5 &= \Tilde{x}-2,&
    \Tilde{l}_6 &= 2\Tilde{x}-1.
\end{align*}
Then we can find the open rational Grasstope of $Z$ as in Figure \ref{fig:mobius}.

\begin{figure}
    \centering
    \includegraphics[scale=0.35]{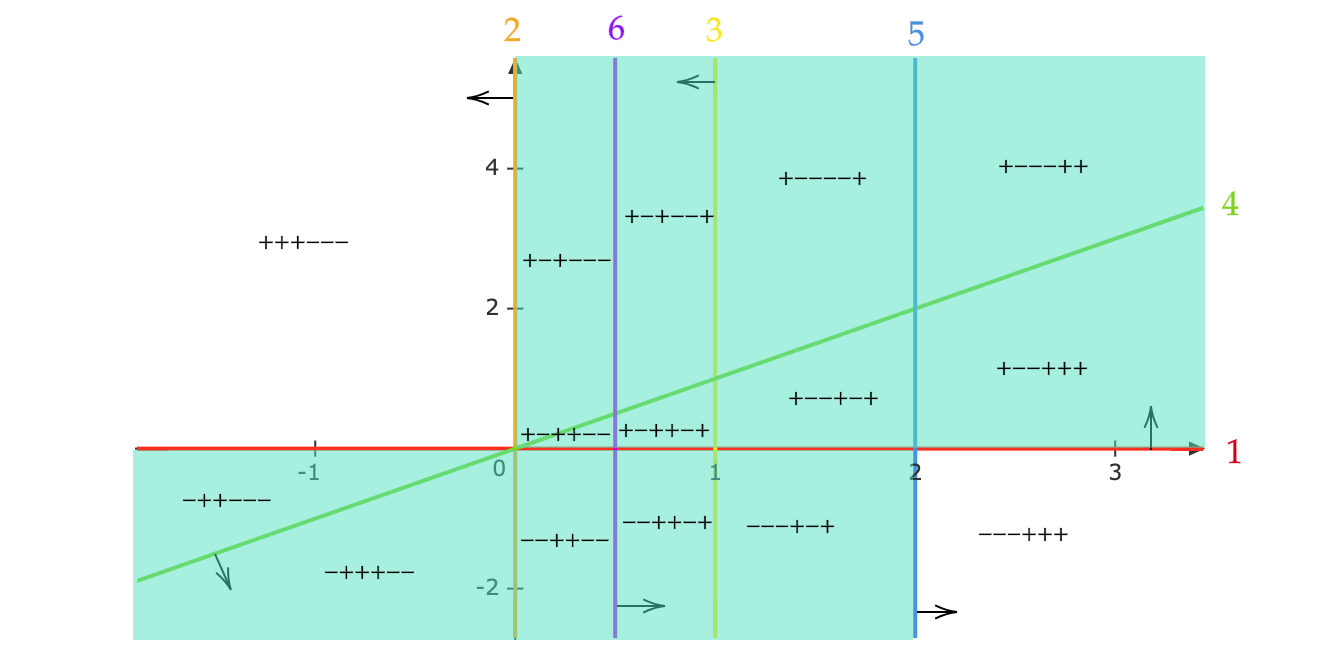}
    \caption{The six lines corresponding to the rows of $Z$ are pictured as described, with orientations given by the arrows. The regions can then be labelled by sign patterns. The shaded portion of the figure is the Grasstope, and it consits exactly of those regions with at least two sign changes. In this case, the shaded region is a M\"obius strip.}\label{fig:mobius}
\end{figure}

We claim that the rational Grasstope of $Z$ is the closure of the open rational Grasstope. One can check this by directly finding Pl\"ucker coordinates for some $M \in \Gr_{\geq 0}(2, 6)$ which map to the points that lie on the boundary. For example, to find such $M$ for any point of the form $(0, a)$ with $a\geq 0$, we solve $\wedge^2(Z)\times \wedge^2(M) = (1:0:a)$. Then $\wedge^2(M)$ must have nonnegative entries which satisfy the Pl\"ucker relations. This process is made easier if one recalls that the Pl\"ucker relations are trivially satisfied if all entries are zero except for ones which correspond to pairwise overlapping submatrices, that is any two nonzero minors come from submatrices which share a column. In this case, since \[\wedge^2(Z) = \begin{bmatrix}
    1 &0 &0 &-1& 1& 0& 1&  0&  0&  -1& -1& 0&  0& 1& 0 \\ 
     0& 3& 0 &0&  0& 3& -2& 0&  0&  2&  -1& 0 & 0 &1& 0\\  
      0& 0& 3& 0&  0& 3& 0&  -2& -3& 2&  0 & -1& 3& 1& -3
\end{bmatrix}^T,\]
we find that the point $(1:0:a)$ is given by $\wedge^2(M) = (0:0:a/3:0:1:0:0:0:0:0:0:0:0:0:0)$. 
Since the only nonzero entries correspond to the minors $p_{23}$ and $p_{24}$, the Pl\"ucker relations are satisfied. 
Thus the portion of the boundary line $x=0$ with $y \geq 0$ is part of the rational Grasstope of $Z$. One can similarly check that all other parts of all six lines are included. 
Therefore the rational Grasstope of $Z$ is closed. Furthermore, topologically, as one can see from Figure \ref{fig:mobius}, it is a M\"obius strip. 
\end{example}

\section{Background on Oriented Matroids}\label{sec:ormatdefs}

Much of the material on hyperplane arrangements can be naturally generalized to oriented matroids \cite{orientedmatroids}. In this section, we review the basics of oriented matroid theory and recall a dictionary between hyperplane arrangements and oriented matroids. This will prove useful in algorithmically counting the number of regions in a given Grasstope (see Section \ref{sec:top}). We begin by discussing signed circuits, which we will often abbreviate as circuits if the meaning is clear from context.

\begin{definition}[Signed circuit axioms]\label{def:circuits}
An oriented matroid consists of a ground set $\mathcal{E}$ and a collection $\mathcal{C}$ of tuples of the form $X = (X^+, X^-)$ called \emph{circuits}, where $X^+, X^-$ are disjoint subsets of $\mathcal{E}$ satisfying
   \begin{enumerate}
    \item $\emptyset$ is not a circuit.
    \item If $X$ is a circuit, then so is $-X = (X^-,X^+)$.
    \item No proper subset of a circuit is a circuit.
    \item (Elimination). If $X_0$ and $X_1$ are two signed circuits with $X_0 \neq -X_1$ and $e \in X_0^+ \cap X_1^-,$ then there is a third circuit $X \in \cC$ with $X^+ \subset (X_0^+ \cup X_1^+) \setminus \{e\}$ and $X^- \subset (X_0^- \cup X_1^-) \setminus \{e\}.$
\end{enumerate} 
\end{definition}

We can obtain the oriented matroid of a matrix as follows.

\begin{definition}[Oriented matroid of a matrix] \label{def:ormatmat}
    Fix a matrix $A$ and let $  \sum_i \lambda_i v_i$ be a minimal linear dependency among its rows. Associate to this dependency the signed set $X = (X^-, X^+),$ where 
    \begin{align*}
        X^- = \{i : \lambda_i < 0\}\\
        X^+ = \{i : \lambda_i > 0\}.
    \end{align*}
    Then the \emph{oriented matroid $\cM_A$} associated to $A$ has as its signed circuits the signed sets coming from minimal linear dependencies. 
\end{definition}

One can check that oriented matroids of matrices satisfy the signed circuit axioms.

\begin{example}[Matroid of a matrix $A$] \label{ex:ormatmat}
Consider the matrix
$$A = \begin{bmatrix}
    1 & 0 & 0 \\
    0 & 1 & 0\\
    0 & 0 & 1\\
    2 & -3 & 4
\end{bmatrix}.$$
Then the only linear dependency up to scaling is $v_4-4v_3 + 3v_2 - v_1 = 0.$ Thus the only circuit (when only one of $X, -X$ is considered) is $24\overline{13},$ where we use a more compact notation in which the bar indicates being in the negative part of the signed set.
\end{example}

\begin{remark}
    One can also define $\cM_A$ by signed bases; the matroid is the map which assigns to each size $k$ subset $I \subset [n]$ the sign of the determinant of $A_I.$ This definition is called the \emph{chirotope definition} \cite[page 6]{orientedmatroids} and satisfies the \emph{chirotope axioms,} which we do not describe here. In particular, the database \cite{database} used in Section \ref{sec:top} indexes matroids by chirotope.
\end{remark}

We recall a few definitions we will need to explain how a hyperplane arrangement can be viewed as an oriented matroid.

\begin{definition}[Composition]
Let $X = (X^+, X^-)$ and $Y = (Y^+, Y^-)$ be signed sets. Then their \emph{composition} $X \circ Y$ is $(X^+ \cup (Y^+ \setminus X^-), X^- \cup (Y^- \setminus X^+)).$
\end{definition}

\begin{definition}[Orthogonality]
Let $X = (X^+, X^-)$ and $Y = (Y^+, Y^-)$ be signed sets. Define $S(X, Y) = (X^+ \cap Y^-) \cup (X^- \cap Y^+).$ We say $X$ and $Y$ are \emph{orthogonal} if $S(X, Y)$ and $S(X, -Y)$ are both empty or both non-empty.
\end{definition}

We define \emph{vectors} as compositions of circuits, and \emph{cocircuits} and \emph{covectors} as the circuits and vectors of the dual matroid \cite[page 4]{orientedmatroids}, respectively. An equivalent definition which is easier for computing is that the covectors of $\cM$ are the signed sets which are orthogonal to all vectors of $\cM$. For more detail, see \cite[Chapter 1]{orientedmatroids}. There is yet another definition in the case of vector configurations.

\begin{definition}[Oriented matroid of a vector configuration]
One can also view the rows $v_i$ of the matrix $A$ as vectors in $\bR^k.$ For such a vector configuration, the covectors can be defined as the set of tuples $Y_H = (Y_H^+, Y_H^-)$ as $H$ runs over oriented hyperplanes, where $Y_H^+$ is the set of vectors in the positive halfspace defined by $H$, and $Y_H^-$ is the set of vectors in the negative halfspace. The \emph{cocircuits} are the minimal covectors. They arise from hyperplanes that are spanned by subsets of $\{v_1, ..., v_n\}.$
\end{definition}

We call a hyperplane arrangement in $\mathbb{A}^k$ \emph{central} if all of the hyperplanes pass through the origin. We define the oriented matroid of a central arrangement as follows. 

\begin{definition}[Oriented matroid of a central hyperplane arrangement]
    Let $H_i$ be the hyperplane in $\mathbb{A}^k$ given by the vanishing of $l_i(x) = a_i \cdot x.$ Then the oriented matroid of the arrangement $\{H_i\}_{i=1}^n$ is the matroid of the vector configuration given by $a_1, ..., a_n,$ or equivalently, the oriented matroid of the matrix with rows $a_1, ..., a_n$. Note that the matroid takes as data not just the hyperplanes, but a choice of ``positive direction" for each hyperplane, which is recorded by choosing $a_i$ rather than $-a_i$ in the linear form $l_i(x).$
\end{definition}

\begin{remark}\label{rem:dict}
   Faces of a hyperplane arrangement correspond to covectors of its oriented matroid, and regions correspond to maximal covectors. The \emph{rank} (denoted by $r$) of the oriented matroid is equal to the dimension $k$ of the ambient space \cite[Chapter 1]{orientedmatroids}.  
\end{remark}

\begin{example}[Matroid of a matrix $A$]
Let $A$ be the matrix 
$$A = \begin{bmatrix}
    1 & 0 & 0 \\
    0 & 1 & 0\\
    0 & 0 & 1\\
    2 & -3 & 4
\end{bmatrix}$$
as in Example \ref{ex:ormatmat}. The cocircuits are given considering hyperplanes $H$ spanned by pairs of rows. For example, if $H = \text{Span}\{a_1, a_3\} = \{y = 0\},$ then $a_2$ is in the positive halfspace (since the $y$-coordinate is $1$) and $a_4$ is in the negative halfspace (since the $y$ coordinate is $-3$). Thus we obtain the cocircuit $2\bar{4}$.

The total set of cocircuits (not including negations) is $\{2\bar{4}, 34, 23, 14, 3\bar{1}, 12\}$. In the previous sections we have used sign vector notation; that is, to each signed set we associate a length $n$ vector with $\pm$ at index $i$ if $i$ is in $X^\pm$ and $0$ otherwise. Applying this convention, one can check that the covectors are exactly the sign vectors with fewer than $3$ sign changes. 
For example, $2\bar{4} \circ 3\bar{1} = 23\overline{14},$ which corresponds to $(-++-).$
\end{example}

\begin{definition}
A matroid is \emph{realizable} if it arises from a hyperplane arrangement over $\bR$. 
\end{definition}

\section{Extremal Counts and Oriented Matroid Grasstopes} \label{sec:top}

Given an $n\times (k+1)$ matrix $Z$, one may ask questions about the topology of its $m=1$ Grasstope. For instance, is it closed, connected, contractible? How many regions of the hyperplane arrangement does it contain? For the $m = 1$ amplituhedron, the answer to the first three questions is ``yes'' \cite[Corollary 6.18]{KarpWilliams}. As for the latter, the $m = 1$ amplituhedron contains as few regions as possible for a simple hyperplane arrangement; that is, all sign vectors with $\var<k$ appear as labels of regions in the arrangement \cite[Proposition 6.14]{KarpWilliams}. In this section, we investigate this question for more general $m=1$ Grasstopes.

\begin{definition}
    A \emph{region} of a hyperplane arrangement is a connected component of the complement of the union of hyperplanes in the arrangement. 
\end{definition}

\begin{definition}
    Let $\mathcal{A}$ be an arrangement of $n$ hyperplanes in $\mathbb{A}^{k}$. Then $\mathcal{A}$ is called \emph{simple} if the intersection of any $i$ hyperplanes in $\mathcal{A}$ has codimension $i$ in $\mathbb{A}^k$ for all $i \leq k$, and is empty for $i > k$.
\end{definition}

In \cite{zaslavsky}, Zaslavsky gives the following formulae for the number of total regions $t(\cA)$ and bounded regions $b(\cA)$ of a simple affine arrangement $\cA$ of $n$ hyperplanes in $\mathbb{A}^k$:

\begin{align*} \label{eq:affregions}
    t(\cA) & = 1 + n + \binom{n}{2} + ... + \binom{n}{k}, \\
    b(\cA) & = \binom{n-1}{k}.
\end{align*}

The former count first appeared in work of Buck \cite{buck}. Note that since intersections of two hyperplanes in any projective arrangement $\mathcal{P}$ have codimension two in $\mathbb{P}^k$ and there are a finite number of them, we can always find some hyperplane avoiding them, and hence an an affine chart which contains all of them (see Figure \ref{fig:threeplanes}). A projective arrangement $\mathcal{P}$ naturally induces an affine arrangement $\mathcal{A}$ in this affine chart. We call $\mathcal{P}$ \emph{simple} if this corresponding affine arrangement is simple. If a region of $\mathcal{P}$ intersects the chosen hyperplane at infinity, it induces two unbounded regions of $\mathcal{A}$. Otherwise it induces a single bounded region. Thus, the total number of regions in a simple arrangement $\mathcal{P}$ is

\begin{equation} \label{eq:projregions}
    r(\mathcal{P}) = b(\cA) + \frac{t(\cA) - b(\cA)}{2}.
\end{equation}

\begin{figure}[htbp]
\centering

\tikzset{every picture/.style={line width=0.75pt}} 

\begin{tikzpicture}[x=0.75pt,y=0.75pt,yscale=-1,xscale=1]

\draw  [line width=1.5]  (187,142.5) .. controls (187,73.19) and (243.19,17) .. (312.5,17) .. controls (381.81,17) and (438,73.19) .. (438,142.5) .. controls (438,211.81) and (381.81,268) .. (312.5,268) .. controls (243.19,268) and (187,211.81) .. (187,142.5) -- cycle ;
\draw [color={rgb, 255:red, 229; green, 35; blue, 35 }  ,draw opacity=1 ][line width=1.5]    (399,52) -- (202,201) ;
\draw [color={rgb, 255:red, 208; green, 2; blue, 27 }  ,draw opacity=1 ]   (383,63) -- (373.24,50.57) ;
\draw [shift={(372,49)}, rotate = 51.84] [color={rgb, 255:red, 208; green, 2; blue, 27 }  ,draw opacity=1 ][line width=0.75]    (10.93,-3.29) .. controls (6.95,-1.4) and (3.31,-0.3) .. (0,0) .. controls (3.31,0.3) and (6.95,1.4) .. (10.93,3.29)   ;
\draw [color={rgb, 255:red, 74; green, 144; blue, 226 }  ,draw opacity=1 ][line width=1.5]    (270,24) -- (378,248) ;
\draw [color={rgb, 255:red, 74; green, 144; blue, 226 }  ,draw opacity=1 ]   (274,38) -- (268.41,41.35) -- (260.71,45.97) ;
\draw [shift={(259,47)}, rotate = 329.04] [color={rgb, 255:red, 74; green, 144; blue, 226 }  ,draw opacity=1 ][line width=0.75]    (10.93,-3.29) .. controls (6.95,-1.4) and (3.31,-0.3) .. (0,0) .. controls (3.31,0.3) and (6.95,1.4) .. (10.93,3.29)   ;
\draw [color={rgb, 255:red, 126; green, 211; blue, 33 }  ,draw opacity=1 ][line width=1.5]    (189,125) -- (420,207) ;
\draw [color={rgb, 255:red, 126; green, 211; blue, 33 }  ,draw opacity=1 ]   (211,133) -- (203.81,149.17) ;
\draw [shift={(203,151)}, rotate = 293.96] [color={rgb, 255:red, 126; green, 211; blue, 33 }  ,draw opacity=1 ][line width=0.75]    (10.93,-3.29) .. controls (6.95,-1.4) and (3.31,-0.3) .. (0,0) .. controls (3.31,0.3) and (6.95,1.4) .. (10.93,3.29)   ;

\draw (370,123.4) node [anchor=north west][inner sep=0.75pt]  [font=\small]  {$---$};
\draw (370,204.4) node [anchor=north west][inner sep=0.75pt]  [font=\small]  {$--+$};
\draw (267,199.4) node [anchor=north west][inner sep=0.75pt]  [font=\small]  {$-++$};
\draw (284,141.4) node [anchor=north west][inner sep=0.75pt]  [font=\small]  {$-+-$};
\draw (234,97.4) node [anchor=north west][inner sep=0.75pt]  [font=\small]  {$++-$};
\draw (304,51.4) node [anchor=north west][inner sep=0.75pt]  [font=\small]  {$+--$};
\draw (212,151.4) node [anchor=north west][inner sep=0.75pt]  [font=\small]  {$+++$};
\draw (405,35.4) node [anchor=north west][inner sep=0.75pt]  [font=\normalsize,color={rgb, 255:red, 208; green, 2; blue, 27 }  ,opacity=1 ]  {$1$};
\draw (250,10.4) node [anchor=north west][inner sep=0.75pt]  [font=\normalsize,color={rgb, 255:red, 208; green, 2; blue, 27 }  ,opacity=1 ]  {$\textcolor[rgb]{0.29,0.56,0.89}{2}$};
\draw (167,116.4) node [anchor=north west][inner sep=0.75pt]  [font=\normalsize,color={rgb, 255:red, 208; green, 2; blue, 27 }  ,opacity=1 ]  {$\textcolor[rgb]{0.49,0.83,0.13}{3}$};

\end{tikzpicture}
    \caption{An oriented hyperplane arrangement and its sign labels.}
    \label{fig:threeplanes}
\end{figure}

From here on we assume our arrangements are simple. We write $\beta(k,n)$ for the number of possible sign patterns of length $n$ with sign variation less than $k$ and $\gamma(k,n)$ for the number of sign patterns with variation greater or equal than $k$ (we identify sign patterns $\sigma$ and $-\sigma$). Note that $\beta(k,n) = 1 + (n-1) + \binom{n-1}{2} + ... + \binom{n-1}{k},$ and $\gamma(k,n) = 2^{n-1}-\beta(k,n).$ Theorem \ref{thm:welldefgrass} and Proposition \ref{prop:opengrass} then give the lower bound $r(\mathcal{P}) - \beta(k,n)$ for the number of regions in $\mathcal{G}_{n,k,1}(Z)$, where $\mathcal{P}$ is the hyperplane arrangement defined by $Z$. An upper bound is given by the minimum of $\gamma(k,n)$ and $r(\mathcal{P})$.

The database \cite{database} contains a catalog of isomorphism classes of oriented matroids \cite[Section 6]{finschiPhD}. Each matroid is indexed by a vector of signs of its bases. Recall from the previous section that any central arrangement produces an oriented matroid. Each projective arrangement $\mathcal{P} \subset \bP^{k}$ produces an oriented matroid: we take the cone over $\mathcal{P}$ to get a central arrangement in $\mathbb{A}^{k+1}$. Observe that an arrangement is simple if and only if the vector of signs of bases of its matroid does not contain zeros, that is, the underlying (non-oriented) matroid is uniform.  

We iterate over all uniform oriented matroid isomorphism classes in this catalog for small values of $k$ and $n$. Note that, for all the values of $k$ and $n$ which we consider, all uniform matroids are realizable \cite{fukuda}, that is, arise from hyperplane arrangements.
Within each isomorphism class, we iterate over all possible reorderings of the ground set and reorientations. 
If the matroid is realizable, at the level of the matrix $Z$ defining the arrangement as described in Section \ref{sec:prelim}, reorderings correspond to permuting the rows and reorientations to negating certain rows.
For each isomorphism class, ordering of the hyperplanes, and a choice of orientation, we compute the number of regions in the corresponding Grasstope (that is, the number of maximal covectors with sign variation greater or equal than $k$; see Remark \ref{rem:dict}). 
It turns out that for many values of $k$ and $n$ the minimal and maximal number of regions in the Grasstope when iterating over reorderings and reorientations does not depend on the oriented matroid isomorophism class.
The minimal and maximal number of regions in the Grasstope for these values of $k$ and $n$ are presented in Table \ref{table:regiondata}.
The \texttt{Python} code used to extract this data is available at \url{https://mathrepo.mis.mpg.de/Grasstopes}. We therefore have a computational proof of the following statement. 

\begin{proposition}
    For each pair of values of $k$ and $n$ in Table \ref{table:regiondata} the minimal and maximal possible number of regions in a Grasstope arising from a simple arrangement of $n$ hyperplanes in $\mathbb{P}^k$ do not depend on the choice of arrangement.
\end{proposition}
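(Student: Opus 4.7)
The plan is to convert the statement into a purely combinatorial enumeration over oriented matroids and then to carry out that enumeration exhaustively on a computer. By Theorem \ref{thm:welldefgrass} (and, where $\Tilde{Z}$ is not well-defined, by Proposition \ref{prop:opengrass}) the points of the Grasstope are exactly those $\mathbf{x}\in\bP^k$ whose twistor vector $L(\mathbf{x})$ has sign variation at least $k$. Because the hyperplanes cutting up $\bP^k$ are the vanishing loci of the linear forms $l_i(\mathbf{x})$ from Remark \ref{rem:twistor}, the sign pattern of $L(\mathbf{x})$ is constant on each region of the arrangement $\mathcal{P}$ defined by the $l_i$, and the dictionary of Remark \ref{rem:dict} identifies regions with maximal covectors of the associated oriented matroid $\mathcal{M}_{\mathcal{P}}$. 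Thus the number of regions in $\mathcal{G}_{n,k,1}(Z)$ equals the number of maximal covectors of $\mathcal{M}_{\mathcal{P}}$ (up to the identification $\sigma\sim -\sigma$) whose sign variation is $\geq k$, a quantity depending only on $\mathcal{M}_{\mathcal{P}}$ together with its ordering and orientation.

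The next step is to enumerate, for each $(k,n)$ in Table \ref{table:regiondata}, all simple arrangements up to relabeling. Simplicity of $\mathcal{P}$ is equivalent to uniformity of $\mathcal{M}_{\mathcal{P}}$ (no zero basis sign), and in the ranges considered every uniform matroid of rank $k+1$ on $n$ elements is realizable \cite{fukuda}. Hence every such $\mathcal{P}$ arises from some entry of Finschi's catalog \cite{database} of uniform oriented matroid isomorphism classes. The plan is therefore to loop over all such isomorphism classes; for each class, loop over all $n!$ reorderings of the ground set and all $2^n$ reorientations (identifying a class with its negation); and for each resulting labeled oriented matroid, compute the covectors from the circuit data via Definition \ref{def:circuits} (equivalently using orthogonality with vectors), keep only the maximal ones, and count those of sign variation $\geq k$. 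Taking the minimum and maximum of these counts within each isomorphism class and comparing across classes will verify the claim.

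The main obstacle is computational: the number of uniform matroids grows quickly with $n$, and the inner loop over orderings and orientations multiplies the cost by $n!\,2^{n-1}$, so the enumeration must be organized carefully to remain tractable for the largest $(k,n)$ entries of Table \ref{table:regiondata}. Practically, one can prune by fixing the orientation of a chosen base element and by exploiting the automorphism group of each matroid to cut down on equivalent labelings; the covector computation itself is standard and can be done once per relabeling by composing (signed) cocircuits. A second, more conceptual, point to check is the $\sigma\sim -\sigma$ identification used when passing between signed covectors and regions of the projective arrangement: dividing the covector count by two is legitimate because a nonzero covector and its negation correspond to the same projective region, and this is built into the count $\gamma(k,n)=2^{n-1}-\beta(k,n)$ used as the target upper bound.

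Finally, the proposition is purely an observational statement: once the enumeration produces, for each fixed $(k,n)$, the same minimum and the same maximum across every uniform isomorphism class, the conclusion follows immediately. The \texttt{Python} implementation referenced at \url{https://mathrepo.mis.mpg.de/Grasstopes} is what carries out this certification; my proof plan is essentially to describe the above reduction and then to appeal to that exhaustive computation, with the tables of per-class minima and maxima (collapsing to the two numbers displayed in Table \ref{table:regiondata}) serving as the certificate.
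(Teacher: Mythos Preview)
Your proposal is correct and follows essentially the same approach as the paper: reduce the region count to a combinatorial count of maximal covectors with $\overline{\var}\geq k$ via Theorem~\ref{thm:welldefgrass} and Remark~\ref{rem:dict}, then exhaustively enumerate all uniform oriented matroid isomorphism classes from the Finschi database together with their reorderings and reorientations, and verify computationally that the per-class extrema coincide. The paper likewise presents this as a computational proof backed by the same \texttt{Python} code you cite.
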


Out of the entries in Table \ref{table:regiondata}, note that for $k=2$ and the pairs $(3,5)$ and $(4,6),$ there is only one (uniform) oriented matroid up to isomorphism \cite{database}. For $k=2$, this is because any matrix may be turned into a totally positive matrix by permuting the rows. This can be done by viewing rows as vectors in the plane and arranging them in counterclockwise position.

\begin{table}[h!] 
\centering
\begin{tabular}{ | m{1.5cm} | m{2cm} | m{2cm}| m{1.5cm}| m{1.5cm}|m{1.5cm}|} 
  \hline
  $k$, $n$ & Minimal & Maximal&  $r(\mathcal{P})$ & $\beta(k,n)$ & $\gamma(k,n)$\\
  \hline 
  $2$, $6$ & $10$ & $16$ & $16$ & $6$ & $26$\\
  $2$, $7$ & $15$ & $22$ & $22$ & $7$ & $57$\\
  $3$, $5$ & $4$ & $5$ & $15$ & $11$ & $5$\\
  $3$, $6$ & $10$ & $16$ & $26$ & $16$ & $16$\\
  $4$, $6$ & $5$ & $6$ & $31$ & $26$ & $6$\\
  $4$, $7$ & $15$ & $22$ & $57$ & $42$ & $22$\\
  \hline
\end{tabular}
\caption{Minimal and maximal possible number of regions in a Grasstope.}
\label{table:regiondata}
\end{table}

The pair $(3, 7)$ does not appear in Table \ref{table:regiondata}. This is the first time we see variation depending on which simple arrangement we choose, with the maximal number of regions ranging from $38$ to $42$. The reorientations and reorderings of a totally positive matrix (i.e. the amplituhedron case) give at most $42$ regions, while all other oriented matroid classes achieve fewer. We can see the maximal numbers of regions for other small $k, n$ in Table \ref{table:posdata}.

\begin{table}[h!] 
\centering
\begin{tabular}{ | m{2cm} | m{2cm} | m{2cm}| m{2cm}|} 
  \hline
  $k$, $n$ & Maximal&  $r(\mathcal{P})$ & $\gamma(k,n)$\\
  \hline 
  $3$, $7$ & $42$  & $42$ & $42$ \\
  $3$, $8$ & $64$ & $64$ & $99$ \\
  $4$, $8$ & $64$ & $99$ & $64$ \\
  $5$, $8$ & $29$ & $120$ & $29$ \\
  $2$, $9$ & $37$ & $37$ & $247$ \\
  $3$, $9$ & $93$ & $93$ & $219$ \\
  $4$, $9$ & $163$ & $163$ & $163$ \\
  \hline
\end{tabular}
\caption{Maximal number of regions from reorienting and reordering a positive matrix.}
\label{table:posdata}
\end{table}

\begin{example}
Any totally positive $6 \times 3$ matrix with the second and fourth rows negated yields a Grasstope which includes all 16 regions counted by Equation \eqref{eq:projregions}. The resulting hyperplane arrangement is cyclic, with just two orientations flipped. See Figure \ref{fig:allregions} to see all of the regions labelled with sign patterns.

 \begin{figure}[htbp]
     \centering
\includegraphics[scale=0.3]{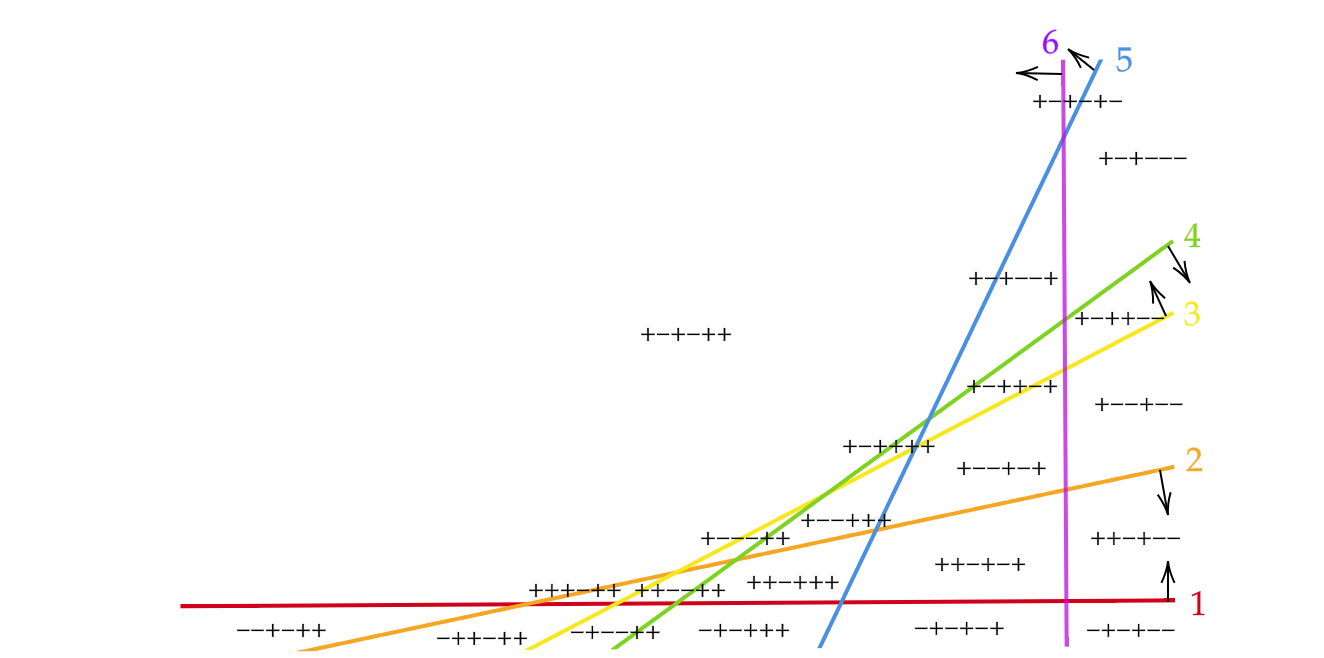}
     \caption{The Grasstope of a totally positive matrix with two rows negated. The six lines are cyclically ordered with orientations indicated by arrows. Every region has at least two sign changes, so the Grasstope is all of $\bP^2$.}
     \label{fig:allregions}
 \end{figure}   
\end{example}

\begin{example}
An example of a $6 \times 3$ matrix whose Grasstope has $16$ regions is any totally positive matrix with the $2$nd and $4$th rows swapped. For examples of totally positive matrices, one can take the Vandermonde matrix 
\[\begin{bmatrix}
    1 & 1 & 1 & 1 & 1 & 1\\
    d_1 & d_2 & d_3 & d_4 & d_5 & d_6\\
    d_1^2 & d_2^2 & d_3^2 & d_4^2 & d_5^2 & d_6^2
\end{bmatrix}^T.\]
with $0<d_1 < ... < d_6.$
\end{example}

Note that the lower bound $r(\mathcal{P}) - \beta(k,n)$ for the number of regions in a Grasstope is attained for all $k$ and $n$, by any tame $m=1$ Grasstope whose arrangement is simple \cite[Proposition 6.14, Theorem 6.16]{KarpWilliams}. In particular, it is attained by the $m=1$ amplituhedron. The upper bound is also attained by reorientation class of the $m=1$ amplituhedron for the values in Tables \ref{table:regiondata} and \ref{table:posdata}. One interesting question to study is to determine whether this holds in general, and to describe all oriented matroids achieving this upper bound. 

\begin{problem}\label{prob:maxregions}
    For any oriented matroid $\cM,$ Let $f(\cM)$ be the maximum over the reorientations of $\cM$ of $\#\{\text{covectors with sign variation greater than or equal to } k\}.$ Let $\cM_{pos}$ be the oriented matroid of a totally positive matrix.  Is $f(\cM_{pos})$ equal to the minimum of $\gamma(k, n)$ and $r(\mathcal{P})$? 
\end{problem}

\begin{problem}
    Which oriented matroids achieve the upper bound in Problem \ref{prob:maxregions}?
\end{problem}

The dictionary between hyperplane arrangements and oriented matroids (Remark \ref{rem:dict}) guides us to generalize the definition of an $m=1$ Grasstope to oriented matroids that are not necessarily realizable, such that the definitions agree when $\cM = \cM_Z$.
 
\begin{definition}[Grasstope of an oriented matroid]\label{def:matroidgtope}
Let $\cM$ be an oriented matroid of rank $r$, and $<$ be a total order on the ground set $\mathcal{E}$ of $\cM.$ Then we define the \emph{Grasstope $\cG(\cM, <)$} to be the subset of covectors $\{v : \overline{\var}(v) \geq r-1\},$ where $r$ is the rank of $\cM$ and the variation is with respect to $<$.
\end{definition}

Note that by Topological Representation Theorem \cite[Theorem 20]{FolLawr}, every oriented matroid arises from a pseudohyperplane arrangement, with covectors labelling the cells of this arrangement. In particular, the Grasstope $\cG(\cM, <)$ can be identified with the union of cells of a pseudohyperplane arrangement that satisfy the sign variation condition from Definition \ref{def:matroidgtope}. Therefore, Grasstopes of oriented matroids are meaningful geometric objects, and topological concepts such as connectedness and contractibility generalize naturally to them. Studying their topological properties is an interesting topic for future research.  

Finally, we use our code to analyze a non-realizable example, which does not attain the upper bound.

\begin{example}
Consider the non-realizable matroid FMR(8) of rank $4$ on $8$ elements, whose signed cocircuits are given in \cite[Table 1]{roudneff}. Reorientations and reorderings give at least $34$ and at most $63$ regions. Thus, unlike the amplituhedron, FMR(8) does not achieve the upper bound of~$64$.
\end{example}

 \section*{Acknowledgements}
 We thank Bernd Sturmfels for suggesting the topic and for many helpful conversations. We also thank Matteo Parisi and Thomas Lam for interesting discussions. Finally, we are very grateful to the anonymous referee for the detailed comments that improved our paper and for suggesting a simpler proof of Lemma 3.2. Y.M. was partially supported by NSF grant DGE2146752.
\bibliographystyle{abbrv}
\bibliography{sources}

\bigskip
\bigskip

\noindent
\footnotesize
 {\bf Authors' addresses:}

 \smallskip

 \noindent Yelena Mandelshtam,
 IAS
 \hfill \url{yelenam@ias.edu}

 \noindent Dmitrii Pavlov,
 MPI-MiS Leipzig and TU Dresden (current)
 \hfill \url{dmitrii.pavlov@mis.mpg.de}

 \noindent Elizabeth Pratt,
 UC Berkeley
 \hfill \url{epratt@berkeley.edu}
\end{document}